\newtheorem{theorem}{Theorem}[section]
\newtheorem{lemma}[theorem]{Lemma}
\theoremstyle{definition}
\newtheorem{definition}[theorem]{Definition}
\theoremstyle{remark}
\numberwithin{equation}{section}
\newcommand{\R}{{\mathbb{R}}}
\newcommand{\Z}{{\mathbb{Z}}}
\newcommand{\C}{{\mathbb{C}}}
\begin{document}

% \title[short text for running head]{full title}
\title{Instabilities in kinetic theory and their relationship to the ergodic theorem}

%    Only \author and \address are required; other information is
%    optional.  Remove any unused author tags.

%    author one information
% \author[short version for running head]{name for top of paper}
\author{Jonathan Ben-Artzi}
\address{DAMTP, University of Cambridge}
\curraddr{}
\email{}
\thanks{}

\subjclass[2010]{Primary }

\date{today}

\begin{abstract}
A method for obtaining simple criteria for instabilities in kinetic theory is described and outlined, specifically  for the relativistic Vlasov-Maxwell system. An important ingredient of the method is an analysis of a parametrized set of averaging operators along trajectories. This leads to a connection with similar problems in ergodic theory. In particular the rate of convergence in the ergodic theorem is a common feature which is studied.
\end{abstract}

\maketitle

\tableofcontents

\section{Introduction and main results}
\subsection{Overview}
Over the last decade significant progress has been made in the mathematical analysis of linear and nonlinear stability in collisionless kinetic theory. This paper continues a well-established sequence of results concerned with \emph{linear instabilities} in which the self-adjoint form of the problem is exploited in order to reduce the search for unstable modes to the search of certain resonances, see for instance \cite{Lin2001,Lin2008,Guo2008a,Ben-Artzi2011}. More recently, there has been significant progress in the understanding of \emph{nonlinear stability/instability} by using a certain continuum of conservation laws, called Casimirs \cite{Lemou2011a,Lemou2011b}. Finally, perhaps the most famous recent result in this context is the rigorous verification of the phenomenon known as ``Landau damping'' \cite{Mouhot2009}.

We are interested in linear instabilities of the Vlasov-Maxwell system describing the evolution of plasmas that are assumed to be colliosnless (either due to being extremely dilute, or due to being extremely hot where the time scales are such that collisions can be neglected).

\subsection{The relativistic Vlasov-Maxwell system}
The Vlasov equation is
	\begin{equation}\label{eq:vlasov}
	\frac{\partial f^\pm}{\partial t}+\hat{v}\cdot\nabla_x f^\pm+\mathbf F^\pm\cdot\nabla_vf^\pm=0,
	\end{equation}
where the two functions $f^\pm=f^\pm(t,x,v)\geq0$ represent the density of positively and negatively charged particles, respectively, that at time $t\in[0,\infty)$ are located at the point $x\in \mathbb R^d$ and have momentum $v\in\mathbb R^d$. In addition, $\hat{v}=v/\sqrt{1+|v|^2}$ is the relativistic velocity (the speed of light is taken as $c=1$ for simplicity) and $\mathbf F^\pm=\mathbf F^\pm(t,x,v)$ is the Lorentz force, given by
	\begin{equation}\label{eq:lorentz}
	\mathbf F^\pm=\pm\left(\mathbf E+\mathbf E^{ext}+\hat{v}\times (\mathbf B+\mathbf B^{ext})\right)
	\end{equation}
and providing the coupling to Maxwell's equations of electromagnetism
	\begin{equation}\label{eq:maxwells-eq}
	\nabla\cdot{\mathbf E}=\rho,\quad
	\nabla\cdot{\mathbf B}=0,\quad
	\nabla\times{\mathbf E}=-\frac{\partial{\mathbf B}}{\partial t},\quad
	\nabla\times{\mathbf B}={\mathbf j}+\frac{\partial {\mathbf E}}{\partial t}.
	\end{equation}
In the above, $\mathbf E=\mathbf E(t,x)$ and $\mathbf B=\mathbf B(t,x)$ are the electric and magnetic fields due to the plasma itself and $\mathbf E^{ext}=\mathbf E^{ext}(t,x),\ \mathbf B^{ext}=\mathbf B^{ext}(t,x)$ are the externally induced fields.  In addition, $\rho=\rho(t,x)$ is the charge density and ${\mathbf j}={\mathbf j}(t,x)$ is the current density, which are defined as
	\begin{equation}\label{eq:densities}
	\rho=\int (f^+-f^-)\;{d}v\quad\text{and}\quad{\mathbf j}=\int \hat{v} (f^+-f^-)\;{d}v.
	\end{equation}
We note that we have taken all constants that typically appear in these equations (such as the particle masses) to be $1$ to keep notation simple. External fields only complicate notation, and will therefore be omitted in what follows. Since we are interested in instabilities of equilibria, we define our notion of instability:
\begin{definition}[Linear instability]
We say that a given equilibrium $f^{0,\pm}(x,v)$ is \emph{linearly unstable}, if the system linearized around it has a purely growing mode solution of the form
	\begin{equation}\label{eq:purely-growing}
	\left(e^{t/T}f^\pm(x,v),e^{t/T}\mathbf{E}(x),e^{t/T}\mathbf{B}(x)\right),
	\quad
	T>0.
	\end{equation}
\end{definition}

\subsection{The $1.5$-dimensional case}
We restrict our attention to a lower dimensional version of the RVM system where certain symmetries are assumed yet all the main physical ingredients of the problem are kept  intact. In particular, the following is the lowest dimensional setting that allows for a nontrivial magnetic field. Spatially, all quantities are assumed to only depend upon the $x_1$ variable, while in the momentum variable dependence is possible upon $v_1$ \emph{and} $v_2$. Hence we write $x$ instead of $x_1$ for brevity, and $v=(v_1,v_2)$. Correspondingly, the electro-magnetic fields have the form $\mathbf E=(E_1,E_2,0)$ and $\mathbf B=(0,0,B)$. The RVM system is transformed into the following system of scalar equations:
		\begin{subequations}\label{eq:rvm-1.5}
		\begin{align}
		&\partial_tf^\pm+\hat{v}_1\partial_xf^\pm\pm(E_1+\hat{v}_2B)\partial_{v_1}f^\pm\pm(E_2-\hat{v}_1B)\partial_{v_2}f^\pm
		=
		0						\label{eq:vlasov-1.5}\\
		&\partial_tE_1
		=
		-j_1						\label{eq:ampere1-1.5}\\
		&\partial_tE_2+\partial_xB
		=
		-j_2						\label{eq:ampere2-1.5}\\
		&\partial_tB
		=
		-\partial_xE_2					\label{eq:faraday-1.5}\\
		&\partial_xE_1
		=
		\rho.						\label{eq:gauss-1.5}
		\end{align}
		\end{subequations}
For simplicity, throughout this paper we shall assume periodicity in the $x$ variable, with period $P$.  Next, we state our main instability result. The main significance of this result is in providing a relatively simple criterion for checking for linear instability: one only has to have knowledge of the spectra of certain Schr\"odinger operators acting on the spatial variable, not the entire phase-space variables. The theorem uses several definitions that are too technical to specify here, and therefore only appear later. 
\begin{theorem}\label{thm:main-intro}
Let $f^{0,\pm}(x,v)$ be an equilibrium of the $1.5$-dimensional RVM system \eqref{eq:rvm-1.5} satisfying the integrability condition \eqref{eq:int-condition} (see below). There exist two self-adjoint Schr\"odinger operators $\mathcal A^\infty_1$ and $\mathcal A^\infty_2$, a bounded operator $\mathcal B^\infty$ and a number $l^\infty$ (all defined below in \eqref{eq:infinity-operators}) acting only in the spatial variable (and not the momentum variable) such that the equilibrium is linearly unstable if

(i) only the constant functions are in $\ker\mathcal A_1^\infty$

(ii) the following inequality holds:
	\begin{equation}\label{eq:thm1-condition}
	\operatorname{neg}\left(\mathcal A_2^\infty+\mathcal B^\infty\left(\mathcal A_1^\infty\right)^{-1}\left(\mathcal B^\infty\right)^*\right)> \operatorname{neg}\left(\mathcal A_1^\infty\right)+\operatorname{neg}(-l^\infty),
	\end{equation}
where $\operatorname{neg}(A)$ is the number of negative eigenvalues of the self-adjoint operator $A$, and $\operatorname{neg}(-l^\infty)=1$ (resp. $0$) if $l^\infty>0$ (resp. $l^\infty\leq0$).
\end{theorem}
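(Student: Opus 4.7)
The plan is to look for purely growing modes at rate $\lambda=1/T>0$ and reduce the linearized RVM eigenvalue problem to a spectral problem on $x$-space alone by combining potentials with integration along equilibrium characteristics. First I would introduce a scalar potential $\phi$ with $E_1=-\phi'$ and a vector potential $\psi$ generating $(E_2,B)$ in an appropriate gauge, parametrizing the electromagnetic perturbation by two scalar functions of $x$ together with a spatially constant mode that ultimately produces $l^\infty$. Integrating the linearized Vlasov equation along the unperturbed characteristics against the convergent weight $e^{-\lambda s}$ (available precisely because $\lambda>0$) expresses $f^\pm$ as a linear operator applied to $(\phi,\psi)$; substituting into the linearized Maxwell equations \eqref{eq:ampere1-1.5}--\eqref{eq:gauss-1.5} produces a self-adjoint family of Schr\"odinger-type operators $\mathcal A_1^\lambda,\mathcal A_2^\lambda$, a bounded coupling $\mathcal B^\lambda$, and a scalar $l^\lambda$ acting on $L^2([0,P])$, such that a growing mode at rate $\lambda$ exists iff the block operator
\[
\mathcal L^\lambda:=\begin{pmatrix}\mathcal A_1^\lambda & \mathcal B^\lambda \\ (\mathcal B^\lambda)^\ast & \mathcal A_2^\lambda\end{pmatrix}
\]
has a nontrivial kernel subject to the finite-dimensional constraint encoded by $l^\lambda$.

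Once this $\lambda$-dependent reduction is in hand, the central algebraic step is a Schur complement computation. Under hypothesis (i), $\mathcal A_1^\lambda$ is invertible modulo constants, and the kernel of $\mathcal L^\lambda$ is in bijection with the kernel of the Schur complement
\[
\mathcal M^\lambda:=\mathcal A_2^\lambda+\mathcal B^\lambda(\mathcal A_1^\lambda)^{-1}(\mathcal B^\lambda)^\ast,
\]
while Haynsworth-type inertia additivity gives $\operatorname{neg}(\mathcal L^\lambda)=\operatorname{neg}(\mathcal A_1^\lambda)+\operatorname{neg}(\mathcal M^\lambda)+\operatorname{neg}(-l^\lambda)$. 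I would then run a continuity/counting argument in $\lambda\in(0,\infty)$: as $\lambda\to+\infty$ the averaging weight trivializes the off-diagonal coupling and forces $\operatorname{neg}(\mathcal M^\lambda)=0$, while as $\lambda\to 0^+$ the operators converge to $\mathcal A_j^\infty,\mathcal B^\infty,l^\infty$ and the hypothesis \eqref{eq:thm1-condition} is precisely the inertia statement $\operatorname{neg}(\mathcal M^{0^+})\ge 1$. Since $\operatorname{neg}(\mathcal M^\lambda)$ can only change when an eigenvalue crosses zero, some $\lambda^\ast\in(0,\infty)$ must render $\mathcal M^{\lambda^\ast}$ singular, producing the desired unstable eigenfunction.

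The main obstacle, and the link to ergodic theory flagged in the abstract, lies in the limit $\lambda\to 0^+$. The operators $\mathcal A_j^\lambda$ arise from Abel averages of the form $\lambda\int_0^\infty e^{-\lambda s}(\cdot)\,ds$ along the equilibrium Hamiltonian flow restricted to level sets of its conserved quantities, so their strong convergence to the trajectory-averaged operators $\mathcal A_j^\infty$ is exactly a mean ergodic theorem on those level sets. Strong convergence alone suffices only for one-sided semicontinuity of $\operatorname{neg}(\cdot)$; securing a \emph{strict} drop of $\operatorname{neg}(\mathcal M^\lambda)$ across $(0,\infty)$ requires uniform control on how fast eigenvalues approach their limits as $\lambda\to 0^+$, which is a quantitative ergodic-theorem question. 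Hypothesis (i) is what keeps the Schur complement bounded in the limit by excluding spurious zero modes of $\mathcal A_1^\infty$ beyond the trivial constants, while the integrability condition \eqref{eq:int-condition} ensures the integral operators built from the equilibrium are bounded and compact on suitable weighted $L^2$ spaces, so the whole spectral-counting scheme is well posed.
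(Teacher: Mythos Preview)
Your overall architecture is right and matches the paper's: reduce to a self-adjoint $x$-space system parametrized by $\lambda=1/T$, compare negative-eigenvalue counts at the two endpoints, and find a zero crossing. Two steps, however, would not go through as written.

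First, you propose to track $\operatorname{neg}$ of the Schur complement $\mathcal{M}^\lambda=\mathcal{A}_2^\lambda+\mathcal{B}^\lambda(\mathcal{A}_1^\lambda)^{-1}(\mathcal{B}^\lambda)^*$ over the whole interval $\lambda\in(0,\infty)$. This requires $\mathcal{A}_1^\lambda$ to be invertible on mean-zero functions for \emph{every} $\lambda$, but hypothesis (i) says nothing about $\mathcal{A}_1^\lambda$ for $\lambda>0$. In fact $\mathcal{A}_1^\lambda$ is strictly positive for large $\lambda$ while $\mathcal{A}_1^\infty$ may have negative eigenvalues (they appear on the right of \eqref{eq:thm1-condition}), so by continuity $\mathcal{A}_1^\lambda$ \emph{must} be singular for some intermediate $\lambda$ and your $\mathcal{M}^\lambda$ is undefined there. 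The paper instead tracks the full $3\times 3$ block operator $\mathcal{M}^T$ (with $-\mathcal{A}_1^T$ in the $(1,1)$ corner), forming the Schur complement only at the single endpoint $T=\infty$ where (i) applies. Since the full block then has $\operatorname{neg}=\infty$, this forces a device you omit entirely: a finite-dimensional truncation $\mathcal{M}_n^T$ onto the spans of the first $n$ eigenfunctions of $\mathcal{A}_1^\infty$ and $\mathcal{A}_2^\infty$. On the truncation one gets $\operatorname{neg}(\mathcal{M}_n^T)=n+1$ for small $T$ (uniformly in $n$), compares with the Schur count at $T=\infty$, and obtains a zero crossing at some $T_n$ in a fixed compact interval; a separate compactness argument then extracts a limiting $T_0$.

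Second, you correctly flag that only strong convergence holds as $\lambda\to 0^+$ and that this endangers spectral continuity, but you stop at diagnosing the problem. The truncation is again the cure: for fixed $n$ the map $T\mapsto\mathcal{M}_n^T$ converges in norm at $T=\infty$ (strong equals norm on a fixed finite-dimensional range), so truncated eigenvalues vary continuously. The delicate point is that the threshold $T^*$ beyond which $\operatorname{neg}(\mathcal{M}_n^T)\ge\operatorname{neg}(\mathcal{M}_n^\infty)$ must be \emph{independent of $n$}, else the $T_n$ could escape to $\infty$. That uniformity is the content of a separate resolvent-set stability theorem for strongly continuous Schr\"odinger families under truncation, and it is there---not in the ergodic limit itself---that the hard analysis sits.
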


\subsection{The equilibrium}
Nontrivial equilibria $\left(f^{0,\pm}(x,v),E_1^0(x),E_2^0(x),B^0(x)\right)$ of \eqref{eq:rvm-1.5} are guaranteed to exist due to \cite{Glassey1990}. We shall always assume that $f^{0,\pm}\geq0$ are continuously differentiable and that $E_2^0\equiv0$. Define the energy $e^\pm$ and the momentum $p^\pm$, respectively, by the expressions
	\begin{equation}\label{eq:constants-1.5}
	e^\pm
	=
	\left<v\right>\pm\phi^0(x),\qquad
	p^\pm
	=
	v_2\pm\psi^0(x),
	\end{equation}
where $\phi^0$ and $\psi^0$ are the equilibrium electric and magnetic potentials, satisfying
	\begin{equation}\label{eq:potentials-1.5}
	\partial_x\phi^0
	=
	-E_1^0,
	\qquad
	\partial_x\psi^0
	=
	B^0.
	\end{equation}
It is well-known that $e^\pm,\ p^\pm$ are conserved along trajectories of the linearized Vlasov operators
	\begin{equation}\label{eq:trajectories}
	D^\pm
	=
	\hat{v}\cdot\nabla_x+\mathbf{F}^{0,\pm}\cdot\nabla_v=\left(\hat{v},\mathbf F^{0,\pm}\right)\cdot\nabla_{x,v}.
	\end{equation}
Using Jeans' theorem \cite{Jeans1915} we replace the coordinates $(x,v)$ by $(e^\pm,p^\pm)$ and write the equilibrium distributions as
	\begin{equation}\label{eq:jeans}
	f^{0,\pm}(x,v)=\mu^\pm(e^\pm,p^\pm).
	\end{equation}
We assume that there exist weight functions $w^\pm=c(1+|e^\pm|)^{-\alpha}$ with $\alpha>2$ and $c>0$ such that the integrability condition
	\begin{equation}\label{eq:int-condition}
	\left(\left|\frac{\partial\mu^\pm}{\partial e}\right|+\left|\frac{\partial\mu^\pm}{\partial p}\right|\right)(e^\pm,p^\pm)<w^\pm(e^\pm)
	\end{equation}
holds. This ensures that $\int\left(|\mu^\pm_e|+|\mu^\pm_p|\right)dxdv<\infty$, where we have used shorthand notation for the partial derivatives with respect to the first and second variables respectively (this notation shall appear throughout this paper). Moreover, we define the following functional spaces that include functions that do not necessarily decay
	\begin{equation*}
	L_\pm^2=\left\{h(x,v)\;\Bigg{|}\;  h \text{ is } P\text{-periodic in }x, \left\|h\right\|_\pm^2:=\int_0^P\int_{\mathbb{R}^2}|h|^2w^\pm\;dv\;dx<\infty\right\}.
	\end{equation*}
The norm and inner-product in $L^2_\pm$ are denoted $\|\cdot\|_{L^2_\pm}$ and $\left<\cdot,\cdot\right>_{L^2_\pm}$, respectively. In addition we define
	\begin{equation*}
	L_P^2=\left\{h\text{ is $P$-periodic and square integrable on }[0,P]\right\}
	\end{equation*}
as well as the spaces $L_{P,0}^2$ of functions in $L_P^2$ of zero mean value (over a period), $H_P^2$ of functions whose first and second distributional derivatives are in $L^2_P$ and $H_{P,0}^2$ of functions in $H_P^2$ of zero mean value. The norm and inner-product in $L^2_P$ are denoted $\|\cdot\|_{L^2_P}$ and $\left<\cdot,\cdot\right>_{L^2_P}$, respectively.

\subsection{Seeking a uniform ergodic theorem}
In the course of the proof of Theorem \ref{thm:main-intro} we encounter an application of the ergodic theorem where a detailed knowledge of convergence rates could be advantageous. This can be seen in the definition of the ergodic averaging operators $Q^T_\pm$ (Equation \eqref{eq:qt} below) whose properties are discussed in Lemma \ref{propql} below. These averages are taken along the trajectories of $D^\pm$.   Since a uniform rate of convergence does not exist in general, we present methods for obtaining such rates on certain subspaces.

Since the trajectories of $D^\pm$ (which represent the flow of the linearized RVM system) are quite complicated, we begin by studying first the simplest case of a $1D$ flow on $L^2$ and weighted-$L^2$ spaces. Generalizations of these results to higher dimensional shear flows  \cite{Ben-Artzi2013d} and more general flows \cite{Ben-Artzi2012b} are in preparation.

\emph{1. $L^2$ case.} We consider the self-adjoint operator
	\begin{equation}
	H=-i\frac{d}{dx}:H^1(\R)\subset L^2(\R)\to L^2(\R)
	\end{equation}
and we define the space $L^{2,\sigma}(\R)$ as
	\begin{equation}\label{eq:l2s}
	L^{2,\sigma}(\R)=\left\{f:\mathbb{C}\to\R\ \Big|\ \|f\|^2_{L^{2,\sigma}(\R)}:=\int(1+x^2)^\sigma |f(x)|^2\ dx<\infty\right\}.
	\end{equation}
Then we have:
\begin{theorem}[Uniform ergodic theorem -- $L^2$ case]\label{thm:unif-erg-l2}
For $\sigma>\frac12$, the self-adjoint operator $H=-i\frac{d}{dx}$ satisfies
	\begin{equation}
	\lim_{T\to\infty}\frac 1{2T}\int_{-T}^T e^{itH}dt=0
	\end{equation}
in the uniform operator topology on $\mathcal B(L^{2,\sigma}(\R),L^{2,-\sigma}(\R))$.
\end{theorem}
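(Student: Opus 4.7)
The plan is to use the fact that $e^{itH}$ is simply the translation group $(e^{itH}f)(x)=f(x+t)$, which reduces the ergodic average to a moving average
\begin{equation*}
(Q_T f)(x) := \frac{1}{2T}\int_{-T}^{T} e^{itH}f\,dt = \frac{1}{2T}\int_{x-T}^{x+T}f(y)\,dy,
\end{equation*}
so the whole theorem becomes an elementary weighted inequality.

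The key step is a single application of the Cauchy--Schwarz inequality with respect to the weight $(1+y^2)^\sigma$: writing $f(y)=[(1+y^2)^{\sigma/2}f(y)]\cdot[(1+y^2)^{-\sigma/2}]$ gives
\begin{equation*}
|(Q_T f)(x)|^2 \le \frac{1}{(2T)^2}\Bigl(\int_{x-T}^{x+T}(1+y^2)^{\sigma}|f(y)|^2\,dy\Bigr)\Bigl(\int_{x-T}^{x+T}(1+y^2)^{-\sigma}\,dy\Bigr).
\end{equation*}
The hypothesis $\sigma>\tfrac12$ enters here in an essential way: it is exactly what guarantees that $C_\sigma:=\int_{\R}(1+y^2)^{-\sigma}\,dy<\infty$, so the second factor is bounded by $C_\sigma$ uniformly in $x$ and $T$, while the first factor is bounded by $\|f\|_{L^{2,\sigma}}^2$. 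This yields the pointwise estimate $|(Q_T f)(x)|^2\le \tfrac{C_\sigma}{(2T)^2}\|f\|_{L^{2,\sigma}}^2$.

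To finish, I would integrate this pointwise bound against the weight $(1+x^2)^{-\sigma}$, which is again integrable thanks to $\sigma>\tfrac12$, producing
\begin{equation*}
\|Q_T f\|_{L^{2,-\sigma}(\R)}^2 \le \frac{C_\sigma^{\,2}}{(2T)^2}\|f\|_{L^{2,\sigma}(\R)}^2,
\end{equation*}
i.e.\ $\|Q_T\|_{L^{2,\sigma}\to L^{2,-\sigma}}=O(T^{-1})$, which is strictly stronger than the convergence claimed in the statement.

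There is no real obstacle; the only subtlety is recognising that the correct weight to insert in Cauchy--Schwarz is the one in the $y$-variable, so that both factors of $1/(2T)$ from the mean are absorbed by the globally integrable weight. An alternative Fourier-side proof is available, using that $Q_T$ acts as multiplication by $\sin(T\xi)/(T\xi)$ combined with the Sobolev embedding valid for $\sigma>\tfrac12$, but the direct argument above is cleaner and yields a better rate. The real significance of the theorem is not in its proof but in its serving as a model for the nontrivial generalisations alluded to in the introduction, where no such explicit translation representation is available.
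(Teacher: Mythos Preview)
Your proof is correct and in fact yields the sharper rate $O(T^{-1})$. The paper takes the spectral route you mention as an alternative: it writes $P^Tf=\int_{\R}\frac{\sin\lambda T}{\lambda T}\,dE(\lambda)f$ via the functional calculus, splits into $|\lambda|<\epsilon$ and $|\lambda|\ge\epsilon$, and handles the low-frequency piece by showing that the spectral measure $d(E(\lambda)f,g)$ is absolutely continuous with density $\widehat f(\lambda)\overline{\widehat g(\lambda)}$ bounded by $C(\sigma)\|f\|_{L^{2,\sigma}}\|g\|_{L^{2,\sigma}}$; this pointwise bound on the Fourier side is where the Sobolev embedding for $\sigma>\tfrac12$ enters. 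Optimising in $\epsilon$, that argument only delivers a rate of order $T^{-1/3}$. Your physical-space approach bypasses the spectral theorem entirely by exploiting the explicit identification $e^{itH}=$ translation, and the Cauchy--Schwarz step with weight $(1+y^2)^\sigma$ is a clean substitute for the Fourier machinery. The trade-off is precisely the one you flag in your last sentence: the paper's argument is set up so as to transfer to flows for which no explicit unitary group is available---the phase-space transport operators $D^\pm$ being the target---and the density-of-states estimate $\|A(\lambda)\|_{\mathcal B(L^{2,\sigma},L^{2,-\sigma})}\le C(\sigma)$ is the portable ingredient. Your direct argument, while more efficient here, does not survive that generalisation.
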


\emph{2. Weighted-$L^2$ case.} In the weighted case the result is more interesting, as the constant functions are part of our functional space. We let $0<w\in L^1(\R)\cap L^\infty(\R)$ and define the weighted space
	\begin{equation}
	L_w^2(\R)=\left\{f:\mathbb{C}\to\R\ \Big|\ \|f\|^2_{L^{2}_w(\R)}:=\int |f|^2w<\infty\right\}.
	\end{equation}
On this space, the operator $H$ is no longer symmetric. However, the operator
	\begin{equation}
	H_w=-\frac{i}{w}\frac{d}{dx}:L^2_w(\R)\to L_w^2(\R)
	\end{equation}
is. Self-adjointness, however, is less straightforward. Therefore we shall first prove:

\begin{theorem}\label{thm:self-adjoint}
The operator $H_w:D^\alpha\subset L^2_w(\R)\to L^2_w(\R)$ is essentially self-adjoint, where for a fixed $\alpha\in\mathbb{C}$ with $|\alpha|=1$
	\[
	D^\alpha=\left\{f\in L^2_w(\R)\ \Big| \ H_w f\in L^2_w(\R),\ \lim_{x\to\infty}f(x)=\alpha\lim_{x\to-\infty}f(x)\right\}.
	\]
We designate as $H_w^\alpha$ its unique self-adjoint extension.
\end{theorem}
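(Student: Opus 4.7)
The plan is to apply von Neumann's criterion: verify that $H_w$ is symmetric on $D^\alpha$ and that both deficiency subspaces $\ker(H_w^*\mp iI)$ are trivial. The calculation is essentially explicit because the formal eigenvalue equation for $H_w$ is a first-order linear ODE with bounded coefficients, and the only delicate point is keeping track of boundary behaviour at $\pm\infty$.

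The key preliminary is that every $f\in D^\alpha$ admits classical boundary limits $f(\pm\infty)$, which renders the matching condition $f(\infty)=\alpha f(-\infty)$ meaningful. Indeed, $H_w f\in L^2_w$ reads $\int |f'|^2 w^{-1}\,dx<\infty$, and Cauchy--Schwarz together with $w\in L^1(\R)$ yields
	\[
	\int_\R |f'|\,dx \;\leq\;\left(\int_\R \frac{|f'|^2}{w}\,dx\right)^{1/2}\left(\int_\R w\,dx\right)^{1/2} <\infty,
	\]
so $f'\in L^1(\R)$ and $f(x)=f(0)+\int_0^x f'$ has finite limits at $\pm\infty$; note that $w\in L^1$ also guarantees constants lie in $L^2_w$, so these limits are compatible with membership in $L^2_w$. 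An integration by parts on $[-R,R]$ followed by passage to $R\to\infty$ then shows that for $f,g\in D^\alpha$ the surviving boundary term equals $-i\,f(-\infty)\overline{g(-\infty)}(\alpha\bar\alpha-1)=0$, establishing symmetry. The same computation identifies the adjoint: testing against compactly supported $f\in D^\alpha$ forces $H_w^* g = -ig'/w$ distributionally (and hence $H_w g\in L^2_w$, so that $g(\pm\infty)$ exist by the same Cauchy--Schwarz argument), while testing against $f\in D^\alpha$ with $f(-\infty)\neq 0$ forces the matching relation $g(\infty)=\alpha g(-\infty)$. Thus $D(H_w^*)\subseteq D^\alpha$.

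With the adjoint pinned down, solving $H_w g=\pm ig$ as an ODE finishes the argument. Setting $W(x):=\int_0^x w(y)\,dy$, the solutions are $g(x)=c\exp(\mp W(x))$; since $W$ is bounded (from $w\in L^1$), these lie in $L^\infty\cap L^2_w$, but their boundary ratio satisfies
	\[
	\left|\frac{g(\infty)}{g(-\infty)}\right|=e^{\mp\|w\|_{L^1(\R)}}\neq 1=|\alpha|,
	\]
so the matching condition forces $c=0$. Hence $\ker(H_w^*\mp iI)=\{0\}$ and $H_w$ is essentially self-adjoint. The principal subtlety is the identification $D(H_w^*)\subseteq D^\alpha$: one must justify that the boundary contribution in the adjoint integration-by-parts vanishes against \emph{every} element of $D^\alpha$, not merely the compactly supported ones, and this is precisely where the hypothesis $|\alpha|=1$ enters in an essential way.
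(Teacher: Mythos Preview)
Your argument is correct and its core coincides with the paper's: both identify the adjoint by first testing against compactly supported $f$ (yielding $H_w^*g=-ig'/w$) and then against $f\in D^\alpha$ with $f(-\infty)\neq0$ (forcing the matching condition on $g$), thereby obtaining $D(H_w^*)\subseteq D^\alpha$. Your explicit verification that boundary limits $f(\pm\infty)$ exist, via $f'\in L^1(\R)$, is a useful addition that the paper leaves implicit. Note, however, that once you have $D(H_w^*)\subseteq D^\alpha$, symmetry already gives $D(H_w^*)=D^\alpha$ and hence self-adjointness outright; your subsequent deficiency-index computation solving $H_wg=\pm ig$ is therefore redundant (though not wrong). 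The paper omits that step and instead proceeds to determine the full spectrum of $H_w^\alpha$ and to prove compactness of the resolvent---facts needed downstream for the uniform ergodic theorem but not for the self-adjointness statement itself.
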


With this statement at hand, we can now present a new uniform ergodic theorem:
\begin{theorem}[Uniform ergodic theorem -- weighted-$L^2$ case]\label{thm:unif-erg-wl2}
The convergence
	\[
	\lim_{T\to\infty}\frac 1{2T}\int_{-T}^T e^{itH_w^\alpha}\ dt=P
	\]
holds in the uniform operator topology on $\mathcal B(L^2_w(\R),L^2_w(\R))$ for each $|\alpha|=1$, where $P$ is the orthogonal projection onto the kernel of $H_w^\alpha$.
\end{theorem}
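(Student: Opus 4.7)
The plan is to reduce $H_w^\alpha$, via a change of variables, to the standard first-order operator on a bounded interval with quasi-periodic boundary conditions. This conjugate operator has a purely discrete spectrum with a gap around $0$, and the uniform ergodic theorem then follows at once from the functional calculus.

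Set $W(x) = \int_0^x w(s)\,ds$ and $I = W(\mathbb{R})$. Since $0 < w \in L^1(\mathbb{R})$, the map $W$ is a strictly increasing bijection from $\mathbb{R}$ onto an open interval $I$ of length $L := \|w\|_{L^1}$. The pullback $(Uf)(y) := f(W^{-1}(y))$ defines a unitary isomorphism $U: L^2_w(\mathbb{R}) \to L^2(I)$, since
\[
\|Uf\|^2_{L^2(I)} = \int_I |f(W^{-1}(y))|^2 \, dy = \int_\mathbb{R} |f(x)|^2 w(x)\,dx = \|f\|^2_{L^2_w},
\]
and the chain rule gives $U H_w U^{-1} = -i\,d/dy$. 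The boundary conditions defining $D^\alpha$ correspond under $U$ to the $\alpha$-quasi-periodic condition $g(\sup I) = \alpha \, g(\inf I)$. Thus $H_w^\alpha$ is unitarily equivalent to the classical operator $A_\alpha := -i\,d/dy$ on $L^2(I)$ with quasi-periodic boundary conditions, whose self-adjointness is standard and is how Theorem \ref{thm:self-adjoint} is most naturally interpreted.

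The spectral analysis of $A_\alpha$ is elementary: writing $\alpha = e^{i\theta}$ with $\theta \in [0, 2\pi)$, the ODE $-ig' = \lambda g$ together with the boundary condition yields
\[
\sigma(H_w^\alpha) = \sigma(A_\alpha) = \left\{\lambda_k := \frac{\theta + 2\pi k}{L}\ \Big|\ k \in \mathbb{Z}\right\},
\]
each eigenvalue being simple with eigenfunction proportional to $e^{i\lambda_k y}$. In particular, $0$ is isolated in $\sigma(H_w^\alpha)$ (belonging to it iff $\alpha = 1$), and the spectral gap around $0$ is of size at least $2\pi/L$. Setting $g_T(\lambda) = \sin(T\lambda)/(T\lambda)$ for $\lambda \neq 0$ and $g_T(0) = 1$, one has $(2T)^{-1}\int_{-T}^T e^{itH_w^\alpha}\,dt = g_T(H_w^\alpha)$ by Fubini in the spectral representation. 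Since $|g_T(\lambda)| \leq 1/(T|\lambda|)$ and the nonzero spectrum is bounded away from $0$ by $2\pi/L$, we obtain $\|g_T(H_w^\alpha) - P\|_{\mathcal{B}(L^2_w)} \leq L/(2\pi T) \to 0$, which is the claimed norm convergence.

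The substantive step is the unitary equivalence with a quasi-periodic operator on a finite interval, which crucially exploits $w \in L^1$ so that $L < \infty$. Without the finiteness of $L$, the spectrum would typically fill $\mathbb{R}$ and $0$ would fail to be isolated — precisely the obstruction seen on unweighted $L^2(\mathbb{R})$, which is why Theorem \ref{thm:unif-erg-l2} required passing to weighted spaces. Once the equivalence is in place, the rest is textbook spectral theory.
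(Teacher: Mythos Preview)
Your proof is correct and, at its core, uses the same mechanism as the paper: the operator $H_w^\alpha$ has pure point spectrum with a gap at $0$, so the functional calculus applied to $g_T(\lambda)=\sin(T\lambda)/(T\lambda)$ yields norm convergence immediately. Where you differ from the paper is in how you obtain the spectral picture. The paper first proves Theorem~\ref{thm:self-adjoint} directly---checking symmetry on $D^\alpha$, solving the eigenvalue ODE to get \eqref{eq:1d-weighted-eigenvalues}, and then arguing compactness of the resolvent via a Rellich-type estimate on the tails---and only then invokes the gap in the short proof of Theorem~\ref{thm:unif-erg-wl2}. Your change of variables $y=W(x)$ conjugates $H_w^\alpha$ to $-i\,d/dy$ on a finite interval with quasi-periodic boundary conditions, which delivers self-adjointness, discreteness of the spectrum, and the explicit eigenvalues in one stroke. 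This is cleaner and more conceptual; the paper's route is more hands-on but reaches the same endpoint.

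One small slip: your claim that ``the nonzero spectrum is bounded away from $0$ by $2\pi/L$'' is only correct when $\alpha=1$. For $\alpha=e^{i\theta}$ with $\theta\in(0,2\pi)$ the eigenvalue $\lambda_0=\theta/L$ (or $\lambda_{-1}=(\theta-2\pi)/L$) can sit arbitrarily close to $0$, so the distance from $0$ to the spectrum is $\min(\theta,2\pi-\theta)/L$. This does not affect the argument---for each fixed $\alpha$ the gap is still positive and you get a rate $C_\alpha/T$---but the constant $L/(2\pi T)$ in your final bound should be adjusted accordingly.
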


\subsection{Outline of the paper}
Our strategy for proving Theorem \ref{thm:main-intro} is to first make the ansatz that the linearized Vlasov equation is linearly unstable, as defined in \eqref{eq:purely-growing}. This provides us with a family of equations for $f^\pm$ depending upon the parameter $T>0$. This allows us to find expressions for $f^\pm$ (again, depending on $T$) which we substitute into Maxwell's equations via the charge and current densities. We thus obtain a one-parameter family of self-adjoint systems of equations (in the spatial variable only) depending upon the parameter $T$. We must show that this family has a solution for some $T>0$. This is done in Section \ref{sec:self-adjoint}. In Section \ref{sec:tracking} we use the self-adjointness of this family, as well as continuity properties of its spectrum (if such exist) in order to track its eigenvalues as $T$ varies from $0$ to $+\infty$. Under the conditions of Theorem \ref{thm:main-intro} we find an eigenvalue crossing through $0$, which justifies the ansatz. The most difficult step in the proof, which is merely sketched in the form of Theorem \ref{thm:resolvent}, is closely related to the ergodic theorem, and, specifically, to the non-existence of a rate of convergence for ergodic averages. In Section \ref{sec:ergodic} we consider this problem, and exhibit two instances where a uniform ergodic theorem (that is, an ergodic theorem \emph{with a rate}) can be shown to hold. This presents a first step towards more robust results in this direction. Finally, in Section \ref{sec:prop-oper} we gather some technical lemmas.

\section{An equivalent self-adjoint problem}\label{sec:self-adjoint}
\subsection{Reformulation of the problem}
Linearizing the Vlasov equation \eqref{eq:vlasov} we obtain
	\begin{equation}\label{eq:lin-vlasov}
	\frac{\partial f^\pm}{\partial t}+\hat{v}\cdot\nabla_x f^\pm+\mathbf F^{0,\pm}\cdot\nabla_vf^\pm=-\mathbf F^{\pm}\cdot\nabla_vf^{0,\pm},
	\end{equation}
which, after making the ansatz that the time dependence is as in \eqref{eq:purely-growing}, becomes
	\begin{equation}\label{eq:lin-vlasov2}
	\frac{1}{T}f^\pm+\hat{v}\cdot\nabla_x f^\pm+\mathbf F^{0,\pm}\cdot\nabla_vf^\pm=-\mathbf F^{\pm}\cdot\nabla_vf^{0,\pm}.
	\end{equation}
The right hand side of this equation includes the perturbed Lorentz forces $\mathbf{F}^\pm=\pm\left(\mathbf E+\hat{v}\times \mathbf B\right)$ which we want to express in terms of the electromagnetic potentials $\phi$ and $\psi$. Using Maxwell's equations, and the ansatz \eqref{eq:purely-growing} to replace time derivatives by $\frac{1}{T}$, we obtain the expressions
	\[
	B=\partial_x\psi,\quad E_2=-\frac{1}{T}\psi,\quad E_1=-\partial_x\phi-\frac{1}{T}b.
	\]
Above, $b\in\mathbb{R}$ is simply the mean value of $E_1$ over a period, and is an artifact due to the periodicity we have introduced. Hence \eqref{eq:lin-vlasov2} becomes
	\begin{equation}\label{eq:lin-vlasov3}
	\left(\frac{1}{T}+D^\pm\right)f^\pm=\pm\mu^\pm_e\hat{v}_1\left(\partial_x\phi+\frac{1}{T} b\right)\pm\mu^\pm_p\hat{v}_1\partial_x\psi\pm\frac{1}{T}\left(\mu^\pm_e\hat{v}_2+\mu^\pm_p\right)\psi
	\end{equation}
where the operators $D^\pm$ are given in \eqref{eq:trajectories}.
There are two parallel approaches for inverting this equation in order to obtain an expression for $f^\pm$. In the first, which can be found in \cite{Ben-Artzi2011}, we integrate \eqref{eq:lin-vlasov3} along the trajectories $\left(X^\pm(s;x,v),V^\pm(s;x,v)\right)$ of the vectorfields $D^\pm$ in phase space,  which satisfy
%	\begin{subequations}
	\begin{align*}
	\dot{X}^\pm&=\hat{V}^\pm_1,\\
	\dot{V}^\pm_1&=
	%\pm E_1^0(X^\pm)
	\pm E_1^0\pm\hat{V}^\pm_2B^0(X^\pm),\\
	\dot{V}^\pm_2&=\mp\hat{V}^\pm_1B^0(X^\pm),
	\end{align*}
%	\end{subequations}
with the initial conditions
	\begin{equation*}
	\left(X^\pm(0;x,v),V^\pm(0;x,v)\right)
	=
	(x,v).
	\end{equation*}
Another point of view, presented in \cite{Ben-Artzi2013a}, is to apply the resolvents of $D^\pm$, which are skew-adjoint differential operators. In both cases, the expressions obtained are
	\begin{equation}\label{fpm}
	f^\pm(x,v)=\pm\mu^\pm_e\phi(x)\pm\mu^\pm_p\psi(x)\mp\mu^\pm_eQ^T_\pm\left(\phi-\hat{v}_2\psi-b\hat{v}_1\right)
	\end{equation}
where $Q^T_\pm:L^2_\pm\to L^2_\pm$ are the ergodic averaging operators given by, for $k\in L^2_\pm$,
	\begin{equation}\label{eq:qt}
	(Q^T_\pm k)(x,v)=\frac{1}{T}\int_{-\infty}^0 e^{{s}/{T}}k\left(X^\pm(s;x,v),V^\pm(s;x,v)\right) ds.
	\end{equation}
Substituting \eqref{fpm} into Maxwell's equations we obtain the self-adjoint system of equations
	\begin{equation}\label{eq:matrix1}
	\begin{split}
	&-\mathcal{A}_1^T\phi+\mathcal{B}^T\psi+\mathcal{C}^Tb=0\\
	&\left(\mathcal{B}^T\right)^*\phi+\mathcal{A}_2^T\psi-\mathcal{D}^Tb=0\\
	&\left(\mathcal{C}^T\right)^*\phi-\left(\mathcal{D}^T\right)^*\psi-P\left(T^{-2}-l^T\right)b=0
	\end{split}
	\end{equation}
where
	\begin{align*}
	\mathcal{A}_1^T h&=-\partial_x^2h-\left(\sum_\pm\int\mu^\pm_e\;dv\right)h+\sum_\pm\int\mu^\pm_eQ^T_\pm h\;dv,\\
	\mathcal{A}_2^T h&=-\partial_x^2h+T^{-2}h-\left(\sum_\pm\int\hat{v}_2\mu^\pm_p\;dv\right)h-\sum_\pm\int\mu^\pm_e\hat{v}_2Q^T_\pm (\hat{v}_2h)\;dv,\\
	\mathcal{B}^T h&=\left(\sum_\pm\int\mu^\pm_p\;dv\right)h+\sum_\pm\int\mu^\pm_eQ^T_\pm(\hat{v}_2h)\;dv,\\
	\mathcal{C}^Tb&=b\sum_\pm\int\mu^\pm_eQ^T_\pm\left(\hat{v}_1\right)dv,\\
	\mathcal{D}^Tb&=b\sum_\pm\int\hat{v}_2\mu^\pm_eQ^T_\pm\left(\hat{v}_1\right)dv,\\
	l^T&=\frac{1}{P}\sum_\pm\int_0^P\int\hat{v}_1\mu^\pm_eQ^T_\pm\left(\hat{v}_1\right)dv\;dx.
	\end{align*}
The detailed derivation of this system from Maxwell's equations may be found in \cite{Ben-Artzi2011}. A nontrivial solution of \eqref{eq:matrix1} for some $0<T<+\infty$ verifies the ansatz \eqref{eq:purely-growing} and implies the existence of a growing mode. For brevity we write \eqref{eq:matrix1} as
	\begin{equation}\label{eq:matrix2}
	\mathcal{M}^T\left(\begin{array}{l}\phi\\\psi\\b\end{array}\right)=0.
	\end{equation}
The properties of all operators introduced here shall be collectively discussed in Section \ref{sec:prop-oper}. Primarily, we care about Lemma \ref{propml} describing the properties of $\mathcal{M}^T$. Our strategy is to prove Theorem \ref{thm:main-intro} by showing that \eqref{eq:matrix2} has a nontrivial solution for some $0<T<+\infty$ by tracking the spectrum of $\mathcal{M}^T$ as $T$ varies from $0$ to $+\infty$, and seeking an eigenvalue that crosses through $0$. For this, the self-adjointness of $\mathcal{M}^T$ for all $T\geq0$ is crucial. In addition, we need to understand the form of $\mathcal{M}^T$ (and therefore all other operators) for $T=+\infty$. In particular, the ergodic averages $Q^T_\pm$ converge strongly,  as $T\to+\infty$,  to the projection operators $Q_\pm^\infty$ defined as (see Lemma \ref{propql} below for a precise statement):

\begin{definition}\label{proj-define}
We define $Q^\infty_\pm$ to be the orthogonal projections of $L_\pm^2$ onto $\ker D^\pm$.
\end{definition}
Accordingly, we define
	\begin{equation}\label{eq:infinity-operators}
	\begin{split}
	\mathcal{A}_1^\infty h&=-\partial_x^2h-\left(\sum_\pm\int\mu^\pm_e\;dv\right)h+\sum_\pm\int\mu^\pm_eQ^\infty_\pm h\;dv,\\
	\mathcal{A}_2^\infty h&=-\partial_x^2h+T^{-2}h-\left(\sum_\pm\int\hat{v}_2\mu^\pm_p\;dv\right)h-\sum_\pm\int\mu^\pm_e\hat{v}_2Q^\infty_\pm (\hat{v}_2h)\;dv,\\
	\mathcal{B}^\infty h&=\left(\sum_\pm\int\mu^\pm_p\;dv\right)h+\sum_\pm\int\mu^\pm_eQ^\infty_\pm(\hat{v}_2h)\;dv,\\
	l^\infty&=\frac{1}{P}\sum_\pm\int_0^P\int\hat{v}_1\mu^\pm_eQ^\infty_\pm\left(\hat{v}_1\right)dv\;dx.
	\end{split}
	\end{equation}

\section{Tracking the spectrum of $\mathcal{M}^T$ and finding a growing mode}\label{sec:tracking}
As described above, our strategy for finding a value of $0<T<+\infty$ for which $\mathcal{M}^T$ has a nontrivial kernel is to ``track'' its spectrum as $T$ varies from $0$ to $+\infty$ and find an eigenvalue crossing through $0$. As we state precisely in Lemma \ref{propml}, $(0,\infty)\ni T\mapsto\mathcal{M}^T$ is continuous in the uniform operator topology. However, the limit $\lim_{T\to\infty}\mathcal{M}^T=\mathcal{M}^\infty$ is only guaranteed to exist in the  strong topology. A major obstacle is that \emph{strong continuity \textbf{does not} guarantee continuity of the spectrum as a set} \cite[VIII-\S1.3]{Kato1995}. We therefore proceed by first solving an approximate problem set in a finite-dimensional subspace.

\subsection{The truncation}
We define the following two families of finite-dimensional orthogonal projection operators:
	\begin{equation}
	\begin{split}
	P_n&=\text{the orthogonal projection onto the eigenspace associated}\\&\qquad\text{with the first $n$ eigenvalues (counting multiplicity) of $\mathcal{A}_1^\infty$},
	\end{split}
	\end{equation}
and
	\begin{equation}
	\begin{split}
	Q_n&=\text{the orthogonal projection onto the eigenspace associated}\\&\qquad\text{with the first $n$ eigenvalues (counting multiplicity) of $\mathcal{A}_2^\infty$.}
	\end{split}
	\end{equation}
Then we define the \emph{truncated matrix operator} to be
	\begin{equation}\label{def:trunc-op}
	\mathcal{M}^T_n
	=
	\left(
	\begin{array}{ccc}
	{-\mathcal{A}^T_{1,n}}&{\mathcal{B}^T_n}&{\mathcal{C}^T_n}\\
	{\left(\mathcal{B}^T_n\right)^*}&{\mathcal{A}^T_{2,n}}&{-\mathcal{D}^T_n}\\
	{\left(\mathcal{C}^T_n\right)^*}&{-\left(\mathcal{D}^T_n\right)^*}&{-P\left(T^{-2}-l^T\right)}\end{array}
	\right)
	\end{equation}
where
	\begin{align*}
	\mathcal{A}^T_{1,n}=P_n\mathcal{A}^T_1 P_n\quad\qquad\qquad\qquad\mathcal{A}^T_{2,n}=Q_n\mathcal{A}_2^T Q_n\\
	\mathcal{B}^T_n=P_n\mathcal{B}^T Q_n  \qquad  \mathcal{C}^T_n=P_n\mathcal{C}^T\qquad\mathcal{D}^T_n=Q_n\mathcal{D}^T.
	\end{align*}
When $T=+\infty$ the truncated matrix operator becomes
	\begin{equation}\label{eq:mn-infty}
	\mathcal{M}^\infty_n
	=
	\left(
	\begin{array}{ccc}
	{-\mathcal{A}^\infty_{1,n}}&{\mathcal{B}^\infty_n}&{0}\\
	{\left(\mathcal{B}^\infty_n\right)^*}&{\mathcal{A}^\infty_{2,n}}&{0}\\
	{0}&{0}&{Pl^\infty}\end{array}
	\right)
	\end{equation}

\subsection{$T$ small}
For small values of $T$ the analysis is rather simple due to the appearance of the terms $T^{-2}$. We have
\begin{lemma}\label{large-la}
There exists $T_*>0$ such that for any $n\in\mathbb{N}$ and any $T<T_*$, $\mathcal{M}^T_n$ has exactly $n+1$ negative eigenvalues.
\end{lemma}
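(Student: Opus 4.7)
My plan is to establish the lemma by a direct min-max argument. I would exhibit an $(n+1)$-dimensional subspace of the ambient $(2n+1)$-dimensional domain on which the quadratic form of $\mathcal M^T_n$ is strictly negative, together with a complementary $n$-dimensional subspace on which it is strictly positive; since $(n+1)+n=2n+1$ exhausts the dimension, the signature is forced to be $(n,n+1,0)$, and in particular $\mathcal M^T_n$ has exactly $n+1$ negative eigenvalues.

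First I would record the small-$T$ asymptotics of the constituent operators. The substitution $s=Tu$ in \eqref{eq:qt} gives $(Q^T_\pm k)(x,v)=\int_{-\infty}^0 e^u k(X^\pm(Tu;x,v),V^\pm(Tu;x,v))\,du$, so smoothness of the characteristics and dominated convergence yield $Q^T_\pm\to I$ as $T\to 0^+$, with $\|(Q^T_\pm-I)k\|=O(T)$ on sufficiently regular inputs. Feeding this into the definitions of Section \ref{sec:self-adjoint} and Lemma \ref{propql}, the equilibrium potential term and the $Q^T$-integral term in $\mathcal A^T_1$ cancel at $T=0$, giving $\mathcal A^T_1=-\partial_x^2+R_1(T)$ with $\|R_1(T)\|_{\mathrm{op}}=O(T)$; similarly $\mathcal A^T_2\geq T^{-2}I-C$ for some $C$ independent of $n$; $\mathcal B^T$ remains bounded; and $\mathcal C^T, \mathcal D^T$ are of order $O(T)$ in operator norm by parity of $\hat v_1$ in $v_1$ against the even factor $\mu^\pm_e$.

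For the negative subspace I would take $V_-=\mathrm{range}(P_n)\oplus\{0\}\oplus\mathbb R$. On $V_-$ the quadratic form reduces to
\[
-\langle\mathcal A^T_{1,n}\phi,\phi\rangle+2\,\mathrm{Re}\,\langle\mathcal C^T_n b,\phi\rangle-P(T^{-2}-l^T)b^2.
\]
Since $\mathrm{range}(P_n)\subset L^2_{P,0}$, the Poincar\'e inequality $\langle-\partial_x^2\phi,\phi\rangle\geq(2\pi/P)^2\|\phi\|^2$ together with $\|R_1(T)\|=O(T)$ yields $\mathcal A^T_{1,n}\geq cI$ on $\mathrm{range}(P_n)$ for a constant $c>0$ independent of $n$ and all $T$ small. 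The cross term is absorbed by AM--GM, $2|\langle\mathcal C^T_n b,\phi\rangle|\leq\tfrac{c}{2}\|\phi\|^2+CT^2b^2$, and is dominated by the $-\Theta(T^{-2})b^2$ diagonal contribution. Hence the form is bounded above by $-\tfrac{c}{2}\|\phi\|^2-\tfrac{P}{4}T^{-2}b^2$, strictly negative on $V_-\setminus\{0\}$. For the positive subspace I would take $V_+=\{0\}\oplus\mathrm{range}(Q_n)\oplus\{0\}$; the form there is $\langle\mathcal A^T_{2,n}\psi,\psi\rangle\geq(T^{-2}/2)\|\psi\|^2$ by the $T^{-2}$ dominance in $\mathcal A^T_2$.

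The main obstacle I anticipate is arranging the threshold $T_*$ to be \emph{uniform in} $n$. The entire argument relies on each of the relevant constants---the Poincar\'e constant $(2\pi/P)^2$, the $O(T)$ operator-norm bounds on $R_1$ and on $\mathcal C^T$, and the lower bound $\mathcal A^T_2\geq T^{-2}I-C$---being intrinsic to the un-truncated operators $\mathcal A^T_1,\mathcal A^T_2,\mathcal C^T$ and not to the finite-rank projections $P_n,Q_n$. Provided this uniformity is extracted cleanly from the properties of $Q^T_\pm$ recorded in Lemma \ref{propql} and the companion estimates in Section \ref{sec:prop-oper}, the two bounds $\mathrm{neg}(\mathcal M^T_n)\geq n+1$ and $\mathrm{pos}(\mathcal M^T_n)\geq n$ hold simultaneously for all $T<T_*$ and all $n\in\mathbb N$, and the saturation of dimension closes the proof.
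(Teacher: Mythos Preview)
Your proposal is correct and follows essentially the same route as the paper: exhibit the $n$-dimensional positive subspace $\{0\}\oplus\mathrm{range}(Q_n)\oplus\{0\}$ via $\mathcal A_2^T>0$ for small $T$, the $(n+1)$-dimensional negative subspace $\mathrm{range}(P_n)\oplus\{0\}\oplus\R$ via $\mathcal A_1^T>\gamma$ for small $T$ together with the dominant $-PT^{-2}b^2$ term, and note that all constants come from the un-truncated operators and are therefore uniform in $n$. The only minor deviation is your absorption of the cross term, which invokes $\|\mathcal C^T\|=O(T)$; the paper instead takes $\varepsilon^2=T$ in the AM--GM step, yielding $T\|P_n\phi\|^2+T^{-1}\|\mathcal C^Tb\|^2$, so that mere uniform boundedness of $\mathcal C^T$ (Lemma~\ref{properties2}) suffices and no rate for $\mathcal C^T\to0$ is needed.
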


\begin{proof}
Since $\mathcal{M}_n^T$ is a symmetric mapping on a  $2n+1$-dimensional subspace of $H^2_{P,0}\times H^2_{P}\times\R$ it has $2n+1$ real eigenvalues. Letting $\psi\in H^2_{P}$, we have

	\begin{equation}
	\left<\mathcal{M}_n^T \left(\begin{array}{c}0\\\psi\\0\end{array}\right),\left(\begin{array}{c}0\\\psi\\0\end{array}\right)\right>_{L^2_P\times L^2_P\times\R}
%	=
%	\left<Q_n\mathcal{A}_2^T Q_n\psi,\psi\right>_{L^2_P}
	=
	\left<\mathcal{A}_2^T Q_n\psi,Q_n\psi\right>_{L^2_P}
	>
	0
	\end{equation}
for all $T<\overline{T}$ by Lemma \ref{properties1}. This implies that $\mathcal{M}_n^T$ is positive definite on a subspace of dimension $n$, and, therefore it has at least $n$ positive eigenvalues. Similarly, we now show that there exists a subspace of dimension $n+1$ on which $\mathcal{M}_n^T$ is negative definite. Let $(\phi,0,b)\in H^2_{P,0}\times H^2_{P}\times\R$ and consider
	\begin{align}\label{neg-def}
	\left<\mathcal{M}_n^T \left(\begin{array}{c}\phi\\0\\b\end{array}\right),\left(\begin{array}{c}\phi\\0\\b\end{array}\right)\right>_{L^2_P\times L^2_P\times\R}
	&=
	-\left<\mathcal{A}_1^T P_n\phi,P_n\phi\right>_{L^2_P}
	+
	2\left<\mathcal{C}^T b,P_n\phi\right>_{L^2_P}
	-
	P(T^{-2}-l^T)b^2.
%	&\leq
%	-\left<\mathcal{A}_1^T P_n^*\phi,P_n^*\phi\right>_{L^2_P}
%	+
%	2\left\|\mathcal{C}^T b\right\|_{L^2_P}\left\|P_n^*\phi\right\|_{L^2_P}
%	-
%	P(\lambda^2-l^\lambda)b^2\\
	\end{align}
We estimate the second term:
	\begin{align*}
	2\left|\left<\mathcal{C}^T b,P_n\phi\right>_{L^2_P}\right|
	\leq
	2\left\|\mathcal{C}^T b\right\|_{L^2_P}\left\|P_n\phi\right\|_{L^2_P}
	\leq
	\frac{\left\|\mathcal{C}^T b\right\|^2_{L^2_P}}{\varepsilon^2}+\varepsilon^2\left\|P_n\phi\right\|^2_{L^2_P}.
	\end{align*}
Letting $\varepsilon^2=T$, we have
	\begin{align*}
	\left<\mathcal{M}_n^T \left(\begin{array}{c}\phi\\0\\b\end{array}\right),\left(\begin{array}{c}\phi\\0\\b\end{array}\right)\right>_{L^2_P\times L^2_P\times\R}
	\leq&\\
	-\left<\mathcal{A}_1^T P_n\phi,P_n\phi\right>_{L^2_P}
	+
	T{\left\|P_n\phi\right\|^2_{L^2_P}}
	&-
	P(T^{-2}-l^T)b^2
	+
	T^{-1}\left\|\mathcal{C}^T b\right\|^2_{L^2_P}.
	\end{align*}
By Lemma \ref{properties1}, $\mathcal{A}_1^T>\gamma>0$ for all $T$ sufficiently small, and therefore this expression is negative for all $\phi\in H^2_{P,0}$ and $b\in\R$, since $l^T$ and $\mathcal{C}^T$ are both bounded. Therefore, there exists  a $T_*>0$ such that for every $T<T_*$ there exists an $n+1$ dimensional subspace and on which $\mathcal{M}_n^T$ is negative definite. We conclude that
	\begin{equation}\label{eq:neg-small-t}
	\operatorname{neg}\left(\mathcal{M}_n^T\right)=n+1,
	\hspace{.5cm}
	\text{for all }T<T_*.
	\end{equation}
Notice that $T_*$ does not depend upon $n$.
\end{proof}

\subsection{$T=+\infty$}
We diagonalize $\mathcal{M}^\infty_n$ and count its negative eigenvalues. Considering \eqref{eq:mn-infty}, we see that it may be rewritten as
	\begin{equation}
	\mathcal{F}_n^\infty
	=
	\left(\begin{array}{ccc}
	{\mathcal{K}_{n}^\infty}&{0}&{0}\\{0}&{-\mathcal{A}_{1,n}^\infty}&{0}\\{0}&{0}&{Pl^\infty}
	\end{array}\right)
	\end{equation}
where ${\mathcal{K}_{n}^\infty}=\mathcal{A}_{2,n}^\infty+\left(\mathcal{B}_{n}^\infty\right)^*\left(\mathcal{A}_{1,n}^\infty\right)^{-1}\mathcal{B}_{n}^\infty$. This inversion is allowed since $\mathcal{A}_{1}^\infty$ is invertible on the image of $\mathcal{B}^\infty$, see Lemma \ref{well-defined}. We can therefore conclude that
	\begin{equation}\label{diag-l0}
	\begin{split}
	\operatorname{neg}\left(\mathcal{M}_n^\infty\right)
	&=\operatorname{neg}\left(\mathcal{A}_{2,n}^\infty+\left(\mathcal{B}_{n}^\infty\right)^*\left(\mathcal{A}_{1,n}^\infty\right)^{-1}\mathcal{B}_{n}^\infty\right)+\operatorname{neg}\left(-\mathcal{A}_{1,n}^\infty\right)+\operatorname{neg}\left(l^\infty\right)\\
	&=
	\operatorname{neg}\left(\mathcal{A}_{2,n}^\infty+\left(\mathcal{B}_{n}^\infty\right)^*\left(\mathcal{A}_{1,n}^\infty\right)^{-1}\mathcal{B}_{n}^\infty\right)+n-\dim \ker\left(\mathcal{A}_{1,n}^\infty\right)-\operatorname{neg}\left(\mathcal{A}_{1,n}^\infty\right)+\operatorname{neg}\left(l^\infty\right).
	\end{split}
	\end{equation}

Since there are only finitely may negative eigenvalues, we have the simple statement whose proof is omitted:
\begin{lemma}\label{limit-lemma}
There exists $N>0$ such that for all $n>N$ it holds that \[\operatorname{neg}\left(\mathcal{A}_{1,n}^\infty\right)=\operatorname{neg}\left(\mathcal{A}_{1}^\infty\right)\] and \[\operatorname{neg}\left(\mathcal{A}_{2,n}^\infty+\left(\mathcal{B}_{n}^\infty\right)^*\left(\mathcal{A}_{1,n}^\infty\right)^{-1}\mathcal{B}^\infty_n\right)=\operatorname{neg}\left(\mathcal{A}_{2}^\infty+\left(\mathcal{B}^\infty\right)^*\left(\mathcal{A}_{1}^\infty\right)^{-1}\mathcal{B}^\infty\right).\]
\end{lemma}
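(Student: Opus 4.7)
The plan is to exploit the fact that $\mathcal{A}_1^\infty$ and $\mathcal{A}_2^\infty$ are bounded self-adjoint perturbations of $-\partial_x^2$ on the circle of length $P$: both have compact resolvent, purely discrete spectrum accumulating only at $+\infty$, and in particular only finitely many nonpositive eigenvalues. The Schur complement
\[
\mathcal{K}^\infty:=\mathcal{A}_2^\infty+(\mathcal{B}^\infty)^*(\mathcal{A}_1^\infty)^{-1}\mathcal{B}^\infty
\]
inherits these properties because $(\mathcal{B}^\infty)^*(\mathcal{A}_1^\infty)^{-1}\mathcal{B}^\infty$ is a bounded self-adjoint operator (by Lemma \ref{well-defined} and boundedness of $\mathcal{B}^\infty$), hence a relatively compact perturbation of $\mathcal{A}_2^\infty$. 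In particular $\operatorname{neg}(\mathcal{K}^\infty)<\infty$.

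The first identity is essentially by inspection. By definition $P_n$ projects onto the span of the first $n$ eigenvectors $\{e_k\}$ of $\mathcal{A}_1^\infty$, so $\mathcal{A}_{1,n}^\infty=P_n\mathcal{A}_1^\infty P_n$ is diagonal with eigenvalues $\lambda_1\le\cdots\le\lambda_n$ on $\operatorname{range}P_n$ and vanishes on its orthogonal complement. Choosing $N_1$ so that $\lambda_{N_1+1}>0$, which is possible since only finitely many eigenvalues are nonpositive, yields for every $n\ge N_1$ that the strictly negative eigenvalues of $\mathcal{A}_{1,n}^\infty$ are exactly those of $\mathcal{A}_1^\infty$.

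For the second identity I would combine the analogous eigenvector observation for $\mathcal{A}_2^\infty$ with a norm-convergence estimate for the Schur-complement term. The diagonalization gives
\[
\bigl\|(\mathcal{A}_{1,n}^\infty)^{-1}P_n-(\mathcal{A}_1^\infty)^{-1}\bigr\|_{L^2_{P,0}\to L^2_{P,0}}\le \lambda_{n+1}^{-1}\longrightarrow 0,
\]
where hypothesis (i) of Theorem \ref{thm:main-intro} guarantees that $0$ lies in the resolvent set of $\mathcal{A}_1^\infty$ on $L^2_{P,0}$. Together with the boundedness of $\mathcal{B}^\infty$ this produces norm convergence of the perturbation piece, while the argument used for the first identity also gives $\operatorname{neg}(\mathcal{A}_{2,n}^\infty)=\operatorname{neg}(\mathcal{A}_2^\infty)$ for large $n$. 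Since $\operatorname{neg}(\mathcal{K}^\infty)$ is finite and its negative eigenvalues are isolated from $0$ and from the essential spectrum, a Courant-Fischer min-max comparison applied to $\mathcal{K}^\infty$ and $\mathcal{K}_n^\infty$, projecting the negative eigenspace of $\mathcal{K}^\infty$ into $\operatorname{range}Q_n$ for the lower bound and using spectral isolation for the matching upper bound, yields the second identity for all $n$ beyond some threshold $N_2$.

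The anticipated main obstacle is the second identity, because the Schur complement couples $P_n$ and $Q_n$ nonlinearly and one must control $(\mathcal{A}_{1,n}^\infty)^{-1}$ uniformly as $n$ grows. The key mechanism preventing any blow-up is that, once $n\ge N_1$, the operator norm $\|(\mathcal{A}_{1,n}^\infty)^{-1}\|$ is bounded by the reciprocal of the smallest $|\lambda_k|$, a quantity independent of $n$, so the perturbation term remains well controlled in the limit. Setting $N=\max\{N_1,N_2\}$ yields the lemma.
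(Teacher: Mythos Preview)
The paper omits the proof of this lemma entirely, remarking only that ``since there are only finitely many negative eigenvalues, we have the simple statement whose proof is omitted.'' Your argument is a correct and careful fleshing-out of precisely this idea: the first identity is immediate because $P_n$ is the spectral projection of $\mathcal{A}_1^\infty$ itself, and for the second you correctly exploit that $(\mathcal{A}_{1,n}^\infty)^{-1}P_n=(\mathcal{A}_1^\infty)^{-1}P_n$ to obtain $\mathcal{K}_n^\infty=Q_n\mathcal{K}^\infty Q_n+R_n$ with $\|R_n\|\to 0$, after which a min--max comparison finishes the job. One small caveat your sketch passes over (and which the paper does not address either) is that the \emph{upper} bound $\operatorname{neg}(\mathcal{K}_n^\infty)\le\operatorname{neg}(\mathcal{K}^\infty)$ via min--max requires the $(\operatorname{neg}(\mathcal{K}^\infty)+1)$-st eigenvalue of $\mathcal{K}^\infty$ to be strictly positive, i.e.\ $0\notin\sigma(\mathcal{K}^\infty)$; this is implicitly assumed throughout the paper (it is needed anyway for the eigenvalue-crossing argument that follows).
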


\subsection{$T$ large}
The case of $T\to+\infty$ is more difficult, due to the mere strong convergence $Q^T_\pm\to Q^\infty_\pm$. 
\begin{lemma}\label{small-la}
There exists $T^*>0$ such that for any $n\in\mathbb{N}$ and any $T>T^*$, $\operatorname{neg}(\mathcal{M}^T_n)\geq \operatorname{neg}(\mathcal{M}^\infty_n)$.
\end{lemma}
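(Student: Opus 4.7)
The plan is to construct, for each $n$, a subspace of dimension $\operatorname{neg}(\mathcal{M}^\infty_n)$ on which $\mathcal{M}^T_n$ is negative definite for all sufficiently large $T$, in analogy with the proof of Lemma \ref{large-la}. The natural starting point is the Schur-complement diagonalization used to obtain $\mathcal{F}_n^\infty$: a $\operatorname{neg}(\mathcal{M}^\infty_n)$-dimensional negative subspace of $\mathcal{M}^\infty_n$ is obtained by pulling back, through the inverse of the diagonalizing transformation, the sum of the negative spectral subspace of $\mathcal{K}_n^\infty$, the strictly positive spectral subspace of $\mathcal{A}_{1,n}^\infty$, and (when $l^\infty<0$) the $b$-coordinate.

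The crucial observation is that, although $Q^T_\pm\to Q^\infty_\pm$ only in the strong operator topology on $L^2_\pm$, once they are pre- and post-composed with the finite-rank projections $P_n$ and $Q_n$ (together with the bounded multiplications by $\hat v_1,\hat v_2,\mu^\pm_e,\mu^\pm_p$ appearing in the definitions of the blocks), this strong convergence automatically upgrades to norm convergence, since strong and norm topologies coincide on each finite-dimensional image. Consequently, for each fixed $n$, every block of $\mathcal{M}^T_n$ converges in operator norm to the corresponding block of $\mathcal{M}^\infty_n$ as $T\to\infty$.

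With norm convergence on the $(2n+1)$-dimensional ambient space, the Courant--Fischer min-max principle directly applies: if $\mathcal{M}^\infty_n$ is bounded above by $-c_n\|\cdot\|^2$ (some $c_n>0$) on an $\operatorname{neg}(\mathcal{M}^\infty_n)$-dimensional subspace, the same holds for $\mathcal{M}^T_n$ once $\|\mathcal{M}^T_n-\mathcal{M}^\infty_n\|<c_n/2$. This yields, for each $n$, a threshold $T^*_n$ beyond which $\operatorname{neg}(\mathcal{M}^T_n)\geq\operatorname{neg}(\mathcal{M}^\infty_n)$.

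The main obstacle is securing a $T^*$ \emph{independent of $n$}, since neither the strong convergence rate of $Q^T_\pm$ nor the spectral gap $c_n$ of $\mathcal{M}^\infty_n$ is a priori uniform. The way out is to use the block structure together with Lemma \ref{limit-lemma}: beyond the stabilization index $N$, the only source of growth of $\operatorname{neg}(\mathcal{M}^\infty_n)$ with $n$ is the increasing count of positive eigenvalues of $\mathcal{A}_{1,n}^\infty$, whose eigenvalues diverge to $+\infty$ (as $\mathcal{A}_1^\infty$ is a relatively compact perturbation of $-\partial_x^2$). On eigendirections $(\phi_j,0,0)$ corresponding to these large positive eigenvalues of $\mathcal{A}_1^\infty$, the diagonal contribution $-\langle \mathcal{A}^T_{1,n}\phi_j,\phi_j\rangle$ is large and negative, dominating the off-diagonal cross-terms with $\mathcal{B}^T_n$, $\mathcal{C}^T_n$, $\mathcal{D}^T_n$ uniformly in $T$ (since these latter involve the contractive operators $Q^T_\pm$ together with $L^1$-integrable factors from \eqref{eq:int-condition}). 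Thus it suffices to choose $T^*$ large enough to control the finitely many ``low'' eigendirections (those associated with eigenvalues of $\mathcal{A}_{1,n}^\infty$ and $\mathcal{K}_n^\infty$ up to the stabilization level of Lemma \ref{limit-lemma}), after which diagonal dominance delivers the remaining negative directions uniformly in $n$.
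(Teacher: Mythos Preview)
Your approach is genuinely different from the paper's. The paper does \emph{not} build a negative subspace directly; instead it invokes an abstract resolvent--preservation result (Theorem \ref{thm:resolvent}): if $\rho\in\rho(\mathcal{M}^\infty)$ then $\rho\in\rho(\mathcal{M}^T_n)$ for all $n>N(\rho)$ and $T>T^*(\rho)$. Applied with $\rho=0$, this forbids eigenvalues of $\mathcal{M}^T_n$ from crossing $0$ for large $T$ and $n$, and the neg count inequality follows. The proof of Theorem \ref{thm:resolvent} is by contradiction and involves a delicate $H^{-1}\!\to\!L^2$ bootstrapping of limits of $P_n\Delta P_n f_n^T$; it is only sketched here and deferred to \cite{Ben-Artzi2011b}. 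Your route is more elementary and constructive: it avoids the abstract theorem altogether by exhibiting an explicit negative subspace, exploiting that the growing part of $\operatorname{neg}(\mathcal{M}^\infty_n)$ comes from high eigenmodes of $\mathcal{A}^\infty_1$, where uniform bounds on $\mathcal{A}^T_1-\mathcal{A}^\infty_1$ (not convergence) already give negativity.

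That said, two points in your sketch need tightening. First, the ``low'' subspace must be chosen \emph{independently of $n$}: the vectors $\big((\mathcal{A}^\infty_{1,n})^{-1}\mathcal{B}^\infty_n\psi,\psi,0\big)$ coming from the Schur complement depend on $n$ through both $\mathcal{B}^\infty_n$ and the inverse. You should instead fix $J,K$ large (at least the stabilization index of Lemma \ref{limit-lemma}) and take $W_{\text{low}}$ inside $\operatorname{ran}P_J\times\operatorname{ran}Q_K\times\R$; then the quadratic form $\langle\mathcal{M}^T_n u,u\rangle$ restricted to this subspace is literally independent of $n\geq\max(J,K)$, so a single $T^*$ handles it. Second, the cross-terms between $W_{\text{low}}$ and $W_{\text{high}}=\operatorname{span}\{(\phi_j,0,0):j>J\}$ are not automatically small: the block $\langle\mathcal{A}^T_1\phi_j,\phi'\rangle$ with $\phi_j$ high and $\phi'$ low must be bounded \emph{uniformly in $j$}. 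This works because $\langle\mathcal{A}^\infty_1\phi_j,\phi'\rangle=0$ by orthogonality of eigenfunctions and $\|\mathcal{A}^T_1-\mathcal{A}^\infty_1\|_{L^2_P\to L^2_P}$ is bounded uniformly in $T$ --- but you should say so explicitly. With these two clarifications your argument goes through and gives a self-contained alternative to Theorem \ref{thm:resolvent} for this particular lemma.
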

It is well-known that the spectra of a sequence of strongly continuous operators may have discontinuities (as a set). One can think of the following simple example: let $u_n$ be some orthonormal basis, and let $\pi_N$ be the orthogonal projection operator onto $\operatorname{span}(\{u_n\}_{n\geq N})$. Then $\pi_N\to0$ strongly as $N\to\infty$, but the spectrum of $\pi_N$ is $\{0,1\}$ for all $N$.
The proof of Lemma \ref{small-la} relies on the following theorem regarding the spectra of operators that are strongly continuous:
\begin{theorem}\label{thm:resolvent}
Let $A^T=-\Delta+V^T$ be a Schr\"odinger operator on $L^2(\mathbb{T}^d)$, with domain $H^2(\mathbb{T}^d)$ and with $\{V^T\}_{T\in(0,\infty]}$ a bounded family of strongly continuous, relatively compact perturbations of $\Delta$. Let $P_n:L^2(\mathbb{T}^d)\to L^2(\mathbb{T}^d)$ be the orthogonal projection operator onto the subspace associated to the first $n$ eigenvalues (counting multiplicity) of $A^\infty$. Define $A_n^T=P_nA^T P_n$. Let $\rho\in\rho(A^\infty)$ be an element in the resolvent set of the operator $A^\infty$. Then there exist $N=N(\rho)>0$ and $T^*=T^*(\rho)>0$ such that $\rho\in\rho(A_n^T)$ for all $n>N$ and for all $T>T^*$.
\end{theorem}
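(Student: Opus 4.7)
The plan is a Neumann-series perturbation argument around the limiting truncated operator $A_n^\infty - \rho$, combined with a compactness argument that upgrades the mere strong continuity of $T \mapsto V^T$ to norm convergence of a carefully chosen composite. Since $-\Delta$ has compact resolvent on $\mathbb{T}^d$ and $V^\infty$ is a relatively compact perturbation, the self-adjoint operator $A^\infty$ has purely discrete spectrum, so $d := \operatorname{dist}(\rho, \sigma(A^\infty)) > 0$. Because $P_n$ is the spectral projection of $A^\infty$ onto the first $n$ eigenvectors, it commutes both with $A^\infty$ and with $(A^\infty - \rho)^{-1}$; moreover $A_n^\infty = P_n A^\infty P_n$ acts on $P_n L^2$ as multiplication by the eigenvalues $\lambda_1^\infty, \ldots, \lambda_n^\infty$, each at distance at least $d$ from $\rho$. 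Consequently $(A_n^\infty - \rho)^{-1}$ exists on $P_n L^2$ with operator norm bounded by $1/d$, \emph{uniformly in $n$}.

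Writing $A_n^T - \rho = (A_n^\infty - \rho) + P_n(V^T - V^\infty)P_n$, invertibility via a Neumann series reduces to
\[
\bigl\|(A_n^\infty - \rho)^{-1} P_n(V^T - V^\infty) P_n\bigr\|_{P_n L^2 \to P_n L^2} < 1.
\]
The key reduction is this: since $P_n$ and $(A^\infty - \rho)^{-1}$ commute, for any $\phi \in P_n L^2$ one has $(A_n^\infty - \rho)^{-1} P_n(V^T - V^\infty)P_n \phi = P_n (A^\infty - \rho)^{-1}(V^T - V^\infty)\phi$, and hence the operator norm above is bounded by $\bigl\|(A^\infty - \rho)^{-1}(V^T - V^\infty)\bigr\|_{L^2 \to L^2}$, a quantity \emph{independent of $n$}. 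It therefore suffices to prove this unrestricted norm tends to zero as $T \to \infty$; a single threshold $T^*(\rho)$ then works simultaneously for every $n$, so one may take $N = 1$.

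For this last norm convergence I rely on the compactness of $(A^\infty - \rho)^{-1}$ on $L^2(\mathbb{T}^d)$, which follows because $(-\Delta + 1)^{-1}$ is compact on the torus and $V^\infty$ is a relatively compact perturbation. Approximate it in operator norm by a finite-rank operator $K_\epsilon = \sum_{i=1}^{k_\epsilon} \langle \cdot, f_i\rangle g_i$ with $\|(A^\infty - \rho)^{-1} - K_\epsilon\| < \epsilon$. Using the self-adjointness of $V^T - V^\infty$ inherent in the Schrödinger setting, a direct expansion gives
\[
\bigl\|(A^\infty - \rho)^{-1}(V^T - V^\infty)\bigr\|_{L^2 \to L^2} \leq \epsilon\, \bigl\|V^T - V^\infty\bigr\| + \sum_{i=1}^{k_\epsilon} \|g_i\|\cdot \bigl\|(V^T - V^\infty) f_i\bigr\|_{L^2}.
\]
The first term is at most $2M\epsilon$ with $M$ the uniform bound on the family, while the finite sum vanishes as $T \to \infty$ by the strong continuity hypothesis applied to the finitely many fixed test vectors $f_i$. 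The main obstacle of the whole argument is precisely this uniformity in $n$: strong continuity alone would allow discontinuous jumps of the spectrum (as illustrated by the shifting projection $\pi_N$ discussed just before the theorem), and it is only the relative compactness hypothesis, which compactifies the resolvent $(A^\infty - \rho)^{-1}$, that converts the strong convergence of $V^T$ into the norm convergence of the composite operator demanded by the Neumann series.
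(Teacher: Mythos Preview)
Your argument is correct and takes a genuinely different route from the paper's sketch. The paper proceeds by contradiction: assuming $A_n^T f_n^T=\rho f_n^T$ has nontrivial solutions along sequences $n,T\to\infty$, it extracts a weak limit $f$ of the normalized $f_n^T$ and must then show both that $f\neq0$ and that $A^\infty f=\rho f$; the advertised difficulty is controlling $P_n\Delta P_n f_n^T$, which is first handled in $H^{-1}$ and then bootstrapped to $L^2$. Your Neumann-series perturbation bypasses this entirely by exploiting the fact that $P_n$ is the \emph{spectral} projection of $A^\infty$: the commutation $P_n(A^\infty-\rho)^{-1}=(A^\infty-\rho)^{-1}P_n$ absorbs the unbounded Laplacian into the well-controlled finite-dimensional piece $A_n^\infty$, and only the bounded difference $V^T-V^\infty$ remains. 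The key gain is that the smallness condition becomes $\|(A^\infty-\rho)^{-1}(V^T-V^\infty)\|<1$, \emph{independent of $n$}; compactness of the resolvent together with symmetry of $V^T-V^\infty$ then upgrades strong convergence to norm convergence of this composite. You even obtain the sharper conclusion that no threshold $N$ is needed. The paper's weak-limit approach, while heavier here, is more portable to settings where the truncating projections are not adapted to the limiting operator (as partially occurs in the block structure of $\mathcal{M}_n^T$), which may explain the choice.
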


\begin{proof}[Sketch of proof]
The proof is by contradiction, showing that nontrivial solutions of $A_n^T f_n^T=\rho f_n^T$ for arbitrarily large $n$ and $T$ lead to a nontrivial solution of $A^\infty f=\rho f$, in contradiction to the assumption that $\rho$ is not an eigenvalue. The main difficulty is in understanding the convergence of terms of the form $P_n\Delta P_n f_n^T$. It can be shown that $P_n\Delta P_n f_n^T\to \Delta f$ in the $H^{-1}$ sense as $T,n\to\infty$. However bootstrapping this convergence to $L^2$ requires a more delicate analysis.
\end{proof}
The full proof can be found in \cite{Ben-Artzi2011b}. This theorem allows us to conclude that no eigenvalues of $\mathcal{M}_n^T$ cross through $0$ for large values of $n$ and $T$ if $0$ is in the resolvent set on $\mathcal{M}^\infty$. Lemma \ref{small-la} follows immediately.

\subsection{Conclusion}
Combining Lemmas \ref{limit-lemma} and \ref{small-la}, together with \eqref{eq:neg-small-t} and the condition \eqref{eq:thm1-condition} we conclude that for each $n$ sufficiently large, there exists $T_n\in(T_*,T^*)$ such that $\ker\mathcal{M}_n^{T_n}\neq\emptyset$. Now one only needs to let $n\to\infty$. This requires an argument analogous to the one provided in Theorem \ref{thm:resolvent}, and one can conclude that there indeed exists $T_0\in(T_*,T^*)$ such that $\ker\mathcal{M}^{T_0}\neq\emptyset$ and that there exists an element in this kernel which is a nontrivial solution of the system \eqref{eq:rvm-1.5}.

\section{On uniform ergodic theorems}\label{sec:ergodic}
The most difficult step in the proof of Theorem \ref{thm:main-intro} is hidden in Theorem \ref{thm:resolvent}. As mentioned above, this difficulty stems from the mere strong convergence of the ergodic averaging operators $Q_\pm^T$. Therefore here we take a step back and ask when such operators may have a limit also in the \emph{uniform} operator topology.

We start with the $L^2$ theory:

\begin{proof}[Proof of Theorem \ref{thm:unif-erg-l2}]
We follow the ideas set forth by von Neumann in his proof of the ergodic theorem \cite{VonNeumann1932a}, using the added structure that differential operators have via the Fourier transform. If $\{E(\lambda)\}_{\lambda\in\mathbb{R}}$ is the spectral family of the self-adjoint operator $H=-i\frac{d}{dx}:H^1(\mathbb{R})\subset L^2(\R)\to L^2(\R)$ we have the expression
	\begin{equation}\label{eq:dens-states1}
	\left(E(\lambda)f,g\right)_{L^2(\R)}
	=
	\int_{\xi\leq\lambda}\widehat{f}(\xi)\overline{\widehat{g}(\xi)}\;d\xi
	\end{equation}
where $\widehat{f}(\xi)=(2\pi)^{-1/2}\int_{\R}f(x)e^{-ix\xi}\ dx$ is the Fourier transform of $f$.
Whenever this expression is differentiable with respect to $\lambda$, we get
	\begin{equation}\label{eq:dens-states2}
	\frac{d}{d\lambda}\Big|_{\lambda=\lambda_0}\left(E(\lambda)f,g\right)_{L^2(\R)}
	=
	\widehat{f}(\lambda_0)\overline{\widehat{g}(\lambda_0)}.
	\end{equation}
The pointwise evaluations on the right hand side require $\widehat{f},\widehat{g}\in H^\sigma(\R)$ with $\sigma>1/2$ due to Sobolev embedding. This means that $f,g\in L^{2,\sigma}(\R)$ (see the definition in \eqref{eq:l2s}). We can therefore estimate
	\begin{equation}\label{eq:dens-states3}
	\left|\frac{d}{d\lambda}\Big|_{\lambda=\lambda_0}\left(E(\lambda)f,g\right)_{L^2(\R)}\right|
	\leq
	C(\sigma)\|f\|_{L^{2,\sigma}(\R)}\|g\|_{L^{2,\sigma}(\R)}
	\end{equation}
which implies that there exists an operator $A(\lambda_0):L^{2,\sigma}(\R)\to L^{2,-\sigma}(\R)$ such that $\left<A(\lambda_0)f,g\right>=\frac{d}{d\lambda}\big|_{\lambda=\lambda_0}\left(E(\lambda)f,g\right)_{L^2(\R)}$ where $\left<\cdot,\cdot\right>$ is the $(L^{2,-\sigma}(\R),L^{2,\sigma}(\R))$ dual space pairing (with respect to the $L^2(\R)$ inner product). Furthermore, the operator norm of $A$ may be estimated as
	\begin{equation}\label{eq:dens-states4}
	\left\|A(\lambda_0)\right\|_{\mathcal{B}(L^{2,\sigma}(\R),L^{2,-\sigma}(\R))}
	\leq
	C(\sigma).
	\end{equation}
We are now in a position to finally prove the theorem. Let $P^T=\frac{1}{2T}\int_{-T}^Te^{itH}dt$, then
	\begin{equation*}
	P^Tf
	=
	\frac{1}{2T}\int_{-T}^T\int_{\R}e^{it\lambda}dE(\lambda)f\ dt
	=
	\int_{\R}\frac{\sin\lambda T}{\lambda T}dE(\lambda)f
	=
	\int_{\R\setminus\{0\}}\frac{\sin\lambda T}{\lambda T}dE(\lambda)f
	\end{equation*}
where in the last equality we used the fact that $H$ has a trivial kernel, i.e. $E(\{0\})=0$. We estimate this integral by breaking it up into the following two integrals: $\int_{\R\setminus\{0\}}=\int_{I_\epsilon}+\int_{I_\epsilon^C}$ where $I_\epsilon=(-\epsilon,\epsilon)$ and $\epsilon>0$ (the first integral should be $\int_{I_\epsilon\setminus\{0\}}$, but since $E(\{0\})=0$ this does not matter). We start with the (simpler) integral $\int_{I_\epsilon^C}$:
	\begin{equation}\label{eq:ergodic-est1}
	\begin{split}
	\left\|\int_{I_\epsilon^C}\frac{\sin\lambda T}{\lambda T}dE(\lambda)f\right\|_{ L^{2,-\sigma}(\R)}^2
	&=
	\int_{I_\epsilon^C}\left|\frac{\sin{\lambda T}}{\lambda T}\right|^2d\left\|E(\lambda)f\right\|_{ L^2_{w}(\R)}^2\\
	&\leq
	\frac{1}{\epsilon^2 T^2}\int_{I_\epsilon^C}d\left\|E(\lambda)f\right\|_{ L^2(\R)}^2\\
	&\leq
	\frac{1}{\epsilon^2T^2}\int_{\R}d\left\|E(\lambda)f\right\|_{ L^2(\R)}^2\\
	&=
	\frac{1}{\epsilon^2T^2}\|f\|_{ L^2(\R)}^2\\
	&\leq
	\frac{1}{\epsilon^2T^2}\|f\|_{ L^{2,\sigma}(\R)}^2.
	\end{split}
	\end{equation}
For the other integral we need the estimate \eqref{eq:dens-states4}:
	\begin{equation}\label{eq:ergodic-est2}
	\begin{split}
	\left\|\int_{I_\epsilon}\frac{\sin\lambda T}{\lambda T}dE(\lambda)f\right\|_{ L^{2,-\sigma}(\R)}^2
	&=
	\left\|\int_{I_\epsilon}\frac{\sin\lambda T}{\lambda T}A(\lambda)f\ d\lambda\right\|_{ L^{2,-\sigma}(\R)}^2\\
	&\leq
	C(\sigma)\|f\|_{L^{2,\sigma}(\R)}^2\int_{I_\epsilon}\left|\frac{\sin{\lambda T}}{\lambda T}\right|^2d\lambda\\
	&\leq
	2\epsilon C(\sigma)\|f\|_{L^{2,\sigma}(\R)}^2.
	\end{split}
	\end{equation}
Combining the two estimates \eqref{eq:ergodic-est1} and  \eqref{eq:ergodic-est2} which hold for any $\epsilon>0$ we conclude that indeed $\lim_{T\to\infty}P^T=0$ in $\mathcal{B}(L^{2,\sigma}(\R),L^{2,-\sigma}(\R))$.
\end{proof}

Now we turn to the weighted-$L^2$ theory. In this case, determining the precise spectrum of the operator (and, indeed, even determining that the operator is self-adjoint) requires much more work. However, once this is done, the actual ergodic theorem is much simpler.

\begin{proof}[Proof of Theorem \ref{thm:self-adjoint}]
It is clear that $H_w$ is symmetric, closed and densely defined on $D^\alpha$. To show that it is essentially self-adjoint we let $g\in L^2_w(\R)$ and seek $h\in L^2_w(\R)$ such that
	\begin{equation}\label{eq:int-by-parts2}
	(H_wf,g)_{L^2_w(\R)}=(f,h)_{L^2_w(\R)},\quad\forall f\in D^\alpha.
	\end{equation}
By taking $f$ to be a smooth, compactly supported test function we can conclude that $g$ is differentiable and $-iw^{-1}\frac{d}{dx}g\in L^2_w(\R)$. However $C_0^\infty(\R)$ is not a core. Let $f\in C^\infty(\R)$ be such that $\lim_{x\to\infty}f(x)=\alpha\lim_{x\to-\infty}f(x)$. The left hand side of \eqref{eq:int-by-parts2} becomes
	\begin{align*}
	(H_wf,g)_{L^2_w(\R)}
	&=
	-i\int_{-\infty}^\infty\frac{d}{dx}f(x)\overline{g(x)}\ dx\\
	&=
	-i\lim_{R\to\infty}\int_{-R}^R\frac{d}{dx}f(x)\overline{g(x)}\ dx\\
	&=
	i\lim_{R\to\infty}\left[\int_{-R}^Rf(x)\frac{d}{dx}\overline{g(x)}\ dx-f(R)\overline{g(R)}+f(-R)\overline{g(-R)}\right].
	\end{align*}
Since $-iw^{-1}\frac{d}{dx}g\in L^2_w(\R)$ all limits exist so that we obtain
	\begin{align*}
	(H_wf,g)_{L^2_w(\R)}
	&=
	i\left[\int_{\R}f(x)\frac{d}{dx}\overline{g(x)}\ dx-f(\infty)\overline{g(\infty)}+f(-\infty)\overline{g(-\infty)}\right]\\
	&=
	i\left[\int_{\R}f(x)\frac{d}{dx}\overline{g(x)}\ dx-\alpha f(-\infty)\overline{g(\infty)}+f(-\infty)\overline{g(-\infty)}\right]\\
	&=
	i\int_{\R}f(x)\frac{d}{dx}\overline{g(x)}\ dx-if(-\infty)\left(\alpha\overline{g(\infty)}-\overline{g(-\infty)}\right)
	\end{align*}
which must equal the right hand side of \eqref{eq:int-by-parts2}:
	\[
	i\int_{\R}f(x)\frac{d}{dx}\overline{g(x)}\ dx-if(-\infty)\left(\alpha\overline{g(\infty)}-\overline{g(-\infty)}\right)
	=
	\int_\R f(x)\overline{h(x)}w(x)\ dx,\quad \forall f\in D^\alpha.
	\]
For this equality to hold in general, $g$ must satisfy $\overline\alpha{g(\infty)}=g(-\infty)$, which becomes ${g(\infty)}=\alpha g(-\infty)$ by multiplying by $\alpha$ and recalling that $|\alpha|=1$. Hence we conclude that $g\in D^\alpha$, and therefore $H_w$ is essentially self-adjoint on $D^\alpha$.

Moreover, we can determine the spectrum of $H_w^\alpha$ by looking for solutions of $H_w^\alpha f=\lambda f$. Such solutions have the form
	\[
	f(x)
	=
	Ce^{i\lambda\int_0^xw(t)dt}.
	\]
The condition $f(\infty)=\alpha f(-\infty)$ becomes (letting $\alpha=e^{i\beta},\ \beta\in[0,2\pi)$)
	\[
	\lambda\int_0^\infty w(t)\ dt
	=
	\beta+\lambda\int_0^{-\infty}w(t)\ dt+2\pi k,\quad k\in\Z
	\]
so that we conclude
	\begin{equation}\label{eq:1d-weighted-eigenvalues}
	\lambda_k^\beta
	=
	\|w\|_{L^1(\R)}^{-1}(\beta+2\pi k),\quad k\in\Z.
	\end{equation}

The fact that there are no additional points in the spectrum is due to $H_w^\alpha$ having compact resolvent. Indeed, let us show that $R^\alpha_w(z)=(H_w^\alpha-z)^{-1}$, where $z\in\C\setminus\Sigma(H_w^\alpha)$, is a compact operator $ L^2_w(\R)\to D^\alpha\subset L^2_w(\R)$. It suffices to show that the embedding $ D^\alpha\subset L^2_w(\R)$ is compact. Let $K\subset D^\alpha$ be a bounded set. All elements of $K$ are uniformly bounded near $\pm\infty$, and therefore for every $\epsilon>0$ there exists $M>0$ such that $\int_{|x|>M}|f(x)|^2w(x)\ dx<\epsilon$ for every $f\in K$. Concluding that $K$ is compact in $ L^2_w(\R)$ is standard, using Rellich's theorem on $|x|<M$ and the smallness of the tails on $|x|>M$.
\end{proof}

Proving Theorem \ref{thm:unif-erg-wl2} is now simple due to the existence of a spectral gap:
\begin{proof}[Proof of Theorem \ref{thm:unif-erg-wl2}]
Let $\{E(\lambda)\}_{\lambda\in\R}$ be the spectral family of $H_w^\alpha$ and let $P^T=\frac{1}{2T}\int_{-T}^Te^{itH_w^\alpha}dt$. Then $P=E(\{0\})$ is the orthogonal projection onto the kernel of $H_w^\alpha$. Hence, as before, we can show that we have the representation
	\begin{equation*}
	(P^T-P)f
	=
	\int_{\R\setminus\{0\}}\frac{\sin\lambda T}{\lambda T}dE(\lambda)f
	\end{equation*}
which we again break up into integrals over $I_\epsilon\setminus\{0\}=(-\epsilon,\epsilon)\setminus\{0\}$ and $I_\epsilon^C$. If $\epsilon>0$ is sufficiently small, the first integral makes no contribution due to the spectral gap. The second integral is treated as in \eqref{eq:ergodic-est1}, where the exact properties of the spectral family (and, in particular, whether the spectral measure is absolutely continuous or has atoms) do not matter. Moreover, we observe that these arguments do not require $f$ to be in any special subspace of $L^2_w(\R)$ as was the case before.
\end{proof}

\section{Properties of the operators}\label{sec:prop-oper}
Here we collect all the important properties of the operators appearing in Section \ref{sec:self-adjoint}. The proofs are technical and we refer to \cite{Ben-Artzi2011} for the details.

\begin{lemma}[Properties of $D^\pm$]
$D^\pm$ are skew-adjoint operators on $L_\pm^2$. Their null spaces $\ker{D^\pm}$ consist of all functions  in $L_\pm^2$ that are constant on each connected component in $\R\times\R^2$ of $\{e^\pm=const\text{ and }p^\pm=const\}$. In particular, $\ker{D^\pm}$ contain all functions of $e^\pm$ and of $p^\pm$.
\end{lemma}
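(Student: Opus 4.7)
The plan is to establish skew-adjointness by the method of characteristics together with Stone's theorem, exploiting two structural facts: the vector field $(\hat v,\mathbf F^{0,\pm})$ defining $D^\pm$ is divergence-free in $(x,v)$, and the weight $w^\pm$ is a function of the energy $e^\pm$, which is conserved along the flow. First I would verify $\nabla_x\cdot\hat v=0$ (since $\hat v$ depends only on $v$) and $\partial_{v_1}F_1^{0,\pm}+\partial_{v_2}F_2^{0,\pm}=\pm B^0\bigl(\partial_{v_1}\hat v_2-\partial_{v_2}\hat v_1\bigr)=0$, which holds since the mixed partial derivatives of $\sqrt{1+|v|^2}$ agree. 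Consequently the flow preserves Lebesgue measure in $(x,v)$. Combined with $D^\pm e^\pm=0$ (hence $D^\pm w^\pm=0$), integration by parts on the dense core of $P$-periodic $C^1$ functions with compact support in $v$ yields $\langle D^\pm f,g\rangle_\pm=-\langle f,D^\pm g\rangle_\pm$, where the $x$-boundary terms cancel by periodicity and the $v$-boundary terms vanish by compact support. This proves skew-symmetry.

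For essential skew-adjointness I would invoke Stone's theorem. The characteristic flow $(X^\pm(s;x,v),V^\pm(s;x,v))$ is globally defined because the background fields $(E_1^0,B^0)$ are bounded and the relativistic velocity $\hat v$ is bounded by $1$. Defining $U^\pm(s)f(x,v)=f\bigl(X^\pm(s;x,v),V^\pm(s;x,v)\bigr)$, the measure-preservation and weight-invariance just established show that $\{U^\pm(s)\}_{s\in\R}$ is a strongly continuous one-parameter group of unitary operators on $L^2_\pm$. Its generator agrees (up to sign) with $D^\pm$ on the smooth core, so the closure of $D^\pm$ is skew-adjoint by Stone's theorem.

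For the kernel characterization, the equivalence furnished by Stone's theorem reduces $f\in\ker D^\pm$ to the flow-invariance identity $U^\pm(s)f=f$ for all $s$, holding in $L^2_\pm$. Since $e^\pm$ and $p^\pm$ are conserved, every orbit of the flow is contained in a single connected component of the joint level set $\{e^\pm=\text{const},\ p^\pm=\text{const}\}$; conversely, off of the critical (measure-zero) set where the vector field vanishes, each such connected one-dimensional component is swept out by a single orbit. Thus flow-invariance of $f$ is equivalent to $f$ being a.e.\ constant on each connected component of the joint level sets. The converse inclusion is immediate: a function that is constant on each such component satisfies $f\circ\Phi^\pm_s=f$ pointwise, so it belongs to $\ker(U^\pm(s)-I)$ for every $s$, and the functional calculus places it in $\ker D^\pm$. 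The final assertion that arbitrary measurable functions of $e^\pm$ alone (or $p^\pm$ alone) lie in $\ker D^\pm$ is immediate, since these are automatically constant on every joint level set of $(e^\pm,p^\pm)$.

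The main obstacle is precisely the identification of the connected components of the joint level sets with orbits: this requires a careful treatment of the critical set where $(\hat v,\mathbf F^{0,\pm})$ vanishes and of the distributional meaning of $D^\pm f=0$ for merely $L^2_\pm$ functions, so that the equivalence with flow-invariance survives the passage from the smooth core to all of $L^2_\pm$. These technical points are addressed in \cite{Ben-Artzi2011}.
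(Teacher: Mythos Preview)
Your proposal is correct and follows the standard route (divergence-free vector field plus invariance of the weight $w^\pm=w^\pm(e^\pm)$ to obtain skew-symmetry, then Stone's theorem via the measure-preserving characteristic flow to upgrade to skew-adjointness, and finally the identification of the kernel with flow-invariant functions). The paper itself does not supply a proof of this lemma at all: Section~\ref{sec:prop-oper} merely collects the statements and defers every proof to \cite{Ben-Artzi2011}. Your sketch therefore cannot be compared to the paper's argument, but it is consistent with the approach in that reference, and you appropriately flag the one genuinely delicate point---that connected components of the joint level sets $\{e^\pm=\text{const},\ p^\pm=\text{const}\}$ coincide with orbit closures away from a null set---and send it to the same citation.
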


\begin{lemma}[Properties of $ Q^T_\pm$]\label{propql}
Let $0<{T}<\infty$.
	\begin{enumerate}
	\item\label{qlnorm}
		$ Q^T_\pm$ map $L_\pm^2\to L_\pm^2$ with operator norm = 1. 
		\item\label{laz}
For all $m\in L_\pm^2$, $\left\| Q^T_\pm m-Q^\infty_\pm m\right\|_\pm\to0$ as ${T}\to\infty$.
	\item\label{lainfty}
	For all $m\in L_\pm^2$, $\left\| Q^T_\pm m-m\right\|_\pm\to0$ as ${T}\to0$.

	\item\label{lanotz}
	If $S>0$, then $\left\| Q^T_\pm-Q^S_\pm\right\|=O(|T-S|)$ as $T\to S$, where $\|\cdot\|$ is the operator norm from $L_\pm^2$ to $L_\pm^2$.	
	
	\item
	The projection operators $Q^\infty_\pm$ preserve parity with respect to the variable $v_1$.
	
	\end{enumerate}
\end{lemma}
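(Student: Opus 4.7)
The plan is to exploit skew-adjointness of $D^\pm$ on $L^2_\pm$ (a consequence of the preceding lemma, itself based on the characteristic field $(\hat v,\mathbf F^{0,\pm})$ being divergence-free in $(x,v)$ and on $w^\pm$ being conserved along trajectories) in order to reduce all five claims to spectral calculus on the self-adjoint operator $iD^\pm$. By Stone's theorem the composition-with-flow unitary group $U^\pm_s k := k\circ\Phi^\pm_s$ coincides with $e^{sD^\pm}$, so plugging this into \eqref{eq:qt} and computing the elementary time integral yields the clean resolvent representation
\[
Q^T_\pm \;=\; \frac{1}{T}\int_{-\infty}^{0} e^{s/T}\,e^{sD^\pm}\,ds \;=\; (I+TD^\pm)^{-1} \;=\; \int_{\R}\frac{1}{1-iT\xi}\,dE^\pm(\xi),
\]
where $iD^\pm=\int\xi\,dE^\pm(\xi)$. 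Items (1)--(4) then reduce to elementary bounds on the scalar multiplier $(1-iT\xi)^{-1}$.

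Concretely, since $|(1-iT\xi)^{-1}|\le 1$ with equality at $\xi=0$, the spectral theorem gives $\|Q^T_\pm\|_{L^2_\pm\to L^2_\pm}=1$, proving (1). For (2) and (3), dominated convergence inside the spectral integral does all the work: as $T\to\infty$ the multiplier converges pointwise to $\mathbf 1_{\{\xi=0\}}$ while staying bounded by $1$, so $Q^T_\pm\to E^\pm(\{0\})=Q^\infty_\pm$ strongly; as $T\to 0$ it converges pointwise to $1$ and is again bounded, giving $Q^T_\pm\to I$ strongly. For (4), the resolvent identity yields
\[
Q^T_\pm - Q^S_\pm \;=\; (S-T)(I+TD^\pm)^{-1}D^\pm(I+SD^\pm)^{-1},
\]
and spectral calculus supplies $\|(I+TD^\pm)^{-1}\|\le 1$ together with $\|D^\pm(I+SD^\pm)^{-1}\|=\sup_\xi |\xi|/\sqrt{1+S^2\xi^2}\le 1/S$, so $\|Q^T_\pm-Q^S_\pm\|\le |T-S|/S = O(|T-S|)$ as $T\to S$ with $S>0$ fixed.

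The parity claim (5) is of a different flavor and requires inspecting the symmetry of the characteristic system. I would introduce the involution $\sigma:(x,v_1,v_2)\mapsto(x,-v_1,v_2)$ and check directly in the ODEs for $(X^\pm,V_1^\pm,V_2^\pm)$ that if $t\mapsto(X,V_1,V_2)(t)$ is a solution, then so is $t\mapsto(X(-t),-V_1(-t),V_2(-t))$; this is the intertwining relation
\[
\Phi^\pm_s\circ\sigma \;=\; \sigma\circ\Phi^\pm_{-s}.
\]
Substituting into \eqref{eq:qt} and changing $s\mapsto -s$ converts the backward-in-time average into a forward-in-time one:
\[
(Q^T_\pm k)\circ\sigma \;=\; \widetilde Q^T_\pm(k\circ\sigma), \qquad \widetilde Q^T_\pm h(x,v):=\frac{1}{T}\int_0^\infty e^{-u/T}h(\Phi^\pm_u(x,v))\,du.
\]
An identical spectral computation gives $\widetilde Q^T_\pm=\int (1+iT\xi)^{-1}\,dE^\pm(\xi)$, which again converges strongly to $E^\pm(\{0\})=Q^\infty_\pm$ as $T\to\infty$ (the kernel of $D^\pm$ is insensitive to the direction of time). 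Since $\sigma$ preserves the weight $w^\pm$ (which depends only on the even-in-$v_1$ quantity $e^\pm$), composition with $\sigma$ is an $L^2_\pm$ isometry, so one may pass to the strong limit on both sides to obtain $Q^\infty_\pm\circ\sigma=\sigma\circ Q^\infty_\pm$; specializing to even and odd $k$ then yields the parity statement.

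The main obstacle is not in any individual step but in setting up the functional-calculus framework: one must first confirm that $D^\pm$ is genuinely skew-\emph{adjoint} (not merely skew-symmetric) on $L^2_\pm$, which rests on $U^\pm_s$ being a strongly continuous \emph{unitary} group on $L^2_\pm$. This in turn requires the two preservation properties already invoked in the preceding lemma --- that $(\hat v,\mathbf F^{0,\pm})$ is divergence-free in $(x,v)$ and that $w^\pm$ is constant along trajectories --- and once these are in place the entire lemma collapses into a short catalogue of spectral multiplier estimates together with the symmetry calculation for (5).
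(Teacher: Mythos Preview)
Your argument is correct in every part: the resolvent identification $Q^T_\pm=(I+TD^\pm)^{-1}$ reduces items (1)--(4) to straightforward multiplier bounds, and the time-reversal symmetry $\Phi^\pm_s\circ\sigma=\sigma\circ\Phi^\pm_{-s}$ together with the strong limit of the forward average yields (5). The paper itself does not give a proof here --- it simply defers to \cite{Ben-Artzi2011} --- so there is nothing to compare line by line; that said, your spectral-calculus route is precisely the ``resolvent'' viewpoint the paper alludes to just above \eqref{fpm} as the alternative to direct integration along trajectories, and it packages the estimates more cleanly than the trajectory computations one would otherwise carry out.
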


\begin{lemma}[Properties of $\mathcal{A}_1^T, \mathcal{A}_2^T$]\label{properties1}
Let $0\leq{T}<\infty$.
	\begin{enumerate}
	\item\label{op-norm}
	$\mathcal{A}_1^T$ is self-adjoint on $ L_{P,0}^2 $. $\mathcal{A}_2^T$ is self-adjoint on ${L_P^2}$. Their domains are ${H_{P,0}^2}$ and ${H_P^2}$, respectively, and their spectra are discrete.
	\item
	For all $h\in{H_{P,0}^2}$, $\|\mathcal{A}_1^T h-\mathcal{A}_1^\infty h\|_{{L_P^2}}\to0$ as ${T}\to\infty$. The same is true for $\mathcal{A}_2^T$ with $h\in{H_P^2}$.
	\item
	For $i=1,2$ and $S>0$, it holds that $\|\mathcal{A}^T_i-\mathcal{A}^S\|=O(|{T}-S|)$ as ${T}\to S$, where $\|\cdot\|$ is the operator 	norm from ${H_{P,0}^2}$ to ${L_P^2}$ in the case $i=1$, and from ${H_P^2}$ to ${L_P^2}$ in the case $i=2$.
	\item
	For all $h\in{H_{P,0}^2}$, $\|\mathcal{A}_1^T h+\partial_x^2h\|_{{L_P^2}}\to0$ as ${T}\to0$.
	\item
	When thought of as acting on ${H_P^2}$ (rather than ${H_{P,0}^2}$), the null spaces of $\mathcal{A}_1^T$ and $\mathcal{A}_1^\infty$ both contain the constant functions.
	
	\item\label{operators-positive}
	There exist constants $\gamma>0$ and $\overline{T}>0$ such that $\mathcal{A}^T_i>\gamma>0$ for all ${T}\leq\overline{T}$ and $i=1,2$.
	\end{enumerate}
\end{lemma}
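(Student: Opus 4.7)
The plan is to treat $\mathcal{A}_1^T$ and $\mathcal{A}_2^T$ as bounded symmetric perturbations of $-\partial_x^2$ (respectively $-\partial_x^2 + T^{-2}$) on $L^2_P$, and to transfer the properties of $Q^T_\pm$ from Lemma \ref{propql} to these operators via Fubini and Cauchy--Schwarz against the weights $w^\pm$ from \eqref{eq:int-condition}. Setting $K_1^T h := -\bigl(\sum_\pm\int\mu^\pm_e\,dv\bigr)h + \sum_\pm\int\mu^\pm_e Q^T_\pm h\,dv$ and estimating
\[
\Bigl|\int \mu^\pm_e Q^T_\pm h\,dv\Bigr|^2 \leq \int \frac{|\mu^\pm_e|^2}{w^\pm}\,dv \cdot \int |Q^T_\pm h|^2 w^\pm\,dv,
\]
the first factor is pointwise bounded in $x$ by \eqref{eq:int-condition} and the second is $\|Q^T_\pm h\|_\pm^2 \leq \|h\|_\pm^2$ by Lemma \ref{propql}(\ref{qlnorm}). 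This shows that $K_1^T, K_2^T:L^2_P\to L^2_P$ are bounded uniformly in $T$. Symmetry follows because $D^\pm$ are skew-adjoint, so $Q^T_\pm=(T^{-1}-D^\pm)^{-1}T^{-1}$ is self-adjoint on $L^2_\pm$. The Kato--Rellich theorem then gives self-adjointness of $\mathcal{A}_i^T$ on the stated domains $H^2_{P,0}$ and $H^2_P$, and the spectra are discrete because $-\partial_x^2$ on $[0,P]$ has compact resolvent, a property preserved under bounded perturbation.

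The convergence statement (part 2) and the Lipschitz-in-$T$ estimate (part 3) both reduce, via the same Cauchy--Schwarz bound with $w^\pm$ serving as a dominating function, to the corresponding assertions for $Q^T_\pm$: Lemma \ref{propql}(\ref{laz}) yields strong convergence $\mathcal{A}_i^T\to\mathcal{A}_i^\infty$, and Lemma \ref{propql}(\ref{lanotz}) yields $\|\mathcal{A}_i^T-\mathcal{A}_i^S\|=O(|T-S|)$ in $\mathcal{B}(H^2_P,L^2_P)$. The term $T^{-2}$ in $\mathcal{A}_2^T$ is smooth for $T>0$ and so contributes no difficulty.

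For part 4, Lemma \ref{propql}(\ref{lainfty}) gives $Q^T_\pm h\to h$ in $L^2_\pm$, hence $\sum_\pm\int\mu_e^\pm Q^T_\pm h\,dv \to h\sum_\pm\int\mu_e^\pm dv$ in $L^2_P$, which cancels the other zeroth-order term of $\mathcal{A}_1^T$; the analogous calculation handles $\mathcal{A}_2^T$ (together with the observation that when $h$ is in $H^2_P$ the term $T^{-2}h$ is absorbed on the left-hand side). For part 5, if $c$ is spatially constant then $c\in\ker D^\pm$, so $Q^T_\pm c=c$ for every $T\in(0,\infty]$; substituting into the definitions produces cancellation of all nondifferential terms, leaving $-\partial_x^2 c=0$.

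The main obstacle is part 6, which underpins Lemma \ref{large-la}. For $\mathcal{A}_2^T$ the positivity is immediate since the positive multiple $T^{-2}$ of the identity dominates the uniformly bounded perturbation for $T$ small. For $\mathcal{A}_1^T$ restricted to $H^2_{P,0}$ the Poincar\'e inequality gives $\langle -\partial_x^2 h,h\rangle\geq (2\pi/P)^2\|h\|^2_{L^2_P}$, so one only needs to control $\langle K_1^T h,h\rangle$ from below uniformly in $h$ as $T\to 0$. Part 4 supplies merely strong convergence $K_1^T h\to 0$, so the delicate step is to upgrade this to norm convergence $\|K_1^T\|_{L^2_P\to L^2_P}\to 0$. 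I would argue that $K_1^T$ is compact on $L^2_P$: the velocity integration against $\mu_e^\pm$ is compact thanks to the integrability condition \eqref{eq:int-condition}, and composition with the bounded family $\{Q^T_\pm\}$ preserves this. A standard Arzel\`a--Ascoli-type argument (strong convergence on a precompact unit ball gives norm convergence) then provides $\|K_1^T\|\to 0$ as $T\to 0$. Choosing $\overline T$ so that $\|K_1^T\|<(2\pi/P)^2/2$ for $T\leq\overline T$ yields the required uniform lower bound $\gamma=(2\pi/P)^2/2$.
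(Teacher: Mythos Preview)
The paper does not prove this lemma in the text; it refers the reader to \cite{Ben-Artzi2011}. So there is no approach to compare against, and the question is whether your argument stands on its own. There are two genuine gaps.

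\textbf{Self-adjointness (part 1).} Your claim that $Q^T_\pm=(T^{-1}-D^\pm)^{-1}T^{-1}$ is self-adjoint because $D^\pm$ is skew-adjoint is false: for skew-adjoint $D$ one has $\bigl((\lambda-D)^{-1}\bigr)^*=(\lambda+D)^{-1}$, which differs from $(\lambda-D)^{-1}$ unless $D=0$. The operators $Q^T_\pm$ are \emph{not} self-adjoint on $L^2_\pm$. Symmetry of the perturbations $K_i^T$ on $L^2_P$ comes instead from the time-reversal symmetry of the characteristic ODEs under $v_1\mapsto -v_1$: one checks $\bigl(X^\pm,V_1^\pm,V_2^\pm\bigr)(-s;x,v_1,v_2)=\bigl(X^\pm,-V_1^\pm,V_2^\pm\bigr)(s;x,-v_1,v_2)$, so that $(Q^T_\pm)^*h(x,v_1,v_2)=Q^T_\pm h(x,-v_1,v_2)$ for $h=h(x)$. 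Since $\mu_e^\pm$ and $\hat v_2$ are even in $v_1$, integration in $v$ restores symmetry. (This parity structure is exactly what is flagged in Lemma~\ref{propql}(5).)

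\textbf{Positivity for small $T$ (part 6).} The assertion that $K_1^T$ is compact on $L^2_P$ is incorrect. The multiplication term $h\mapsto\bigl(\sum_\pm\int\mu_e^\pm\,dv\bigr)h$ is never compact, and the velocity-integration map $g\mapsto\int\mu_e^\pm g\,dv$ from $L^2_\pm$ to $L^2_P$ is not compact either: its adjoint $h\mapsto(\mu_e^\pm/w^\pm)h$ is, up to a bounded weight, an isometric embedding of $L^2_P$ into $L^2_\pm$, hence has no compactness. Consequently your Arzel\`a--Ascoli step does not go through. The compactness that actually does the work is the Rellich embedding $H^2_{P,0}\hookrightarrow L^2_{P,0}$: suppose $\mathcal{A}_1^{T_n}h_n=\lambda_n h_n$ with $\|h_n\|_{L^2_P}=1$, $T_n\to 0$, and $\lambda_n\le\gamma_0$ for some $\gamma_0<(2\pi/P)^2$. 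Uniform boundedness of $K_1^{T_n}$ forces $\|h_n\|_{H^2}\le C$, so along a subsequence $h_n\to h$ strongly in $L^2_{P,0}$ with $\|h\|=1$. Then
\[
K_1^{T_n}h_n=K_1^{T_n}(h_n-h)+K_1^{T_n}h\longrightarrow 0
\]
by the uniform bound on the first summand and the strong convergence (part~4) on the second. Passing to the limit in $-\partial_x^2 h_n=\lambda_n h_n-K_1^{T_n}h_n$ gives $-\partial_x^2 h=\lambda_* h$ with $\lambda_*\le\gamma_0$, contradicting the Poincar\'e bound. This yields the desired $\gamma$ and $\overline T$ without any norm convergence of $K_1^T$.
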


\begin{lemma}[Properties of $\mathcal{B}^T, \mathcal{C}^T, \mathcal{D}^T$]\label{properties2}
Let $0<{T}<\infty$.

	\begin{enumerate}
	\item\label{blnorm}
	$\mathcal{B}^T$ maps ${L_P^2}\to{L_P^2}$ with operator bound independent of ${T}$. Moreover, $\mathrm{Ran}(\mathcal{B}^\infty)\subset\{1\}^\perp$.
	
	\item\label{bz}
	For all $h\in{L_P^2}$, as ${T}\to\infty$ we have: $\|\mathcal{B}^T h-\mathcal{B}^\infty h\|_{{L_P^2}}\to0$ and $\|\mathcal{C}^T h\|_{{L_P^2}},\|\mathcal{D}^T h\|_{{L_P^2}}\to0$.

	\item
	If $S>0$, then $\|\mathcal{B}^T-\mathcal{B}^S\|=O(|{T}-{S}|)$ as ${T}\to{S}$, where $\|\cdot\|$ is the operator norm from ${L_P^2}$ to ${L_P^2}$. The same is true for $\mathcal{C}^T, \mathcal{D}^T$.

	\item\label{bl-infty}
	For all $h\in{L_P^2}$, $\|\mathcal{B}^T h\|_{{L_P^2}}\to0$ as ${T}\to0$. The same is true for $\mathcal{C}^T, \mathcal{D}^T$.
	
	\end{enumerate}
\end{lemma}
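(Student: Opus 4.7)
My plan is to prove each of the four claims by combining the corresponding property of $Q^T_\pm$ from Lemma \ref{propql} with the integrability bound \eqref{eq:int-condition} and two structural identities arising from Jeans' representation of the equilibrium.

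For claim (1), I would obtain the uniform-in-$T$ bound on $\mathcal{B}^T:L^2_P\to L^2_P$ by applying Cauchy--Schwarz in $v$ after artificially inserting the weights $w^\pm$, using \eqref{eq:int-condition} to control $\int|\mu^\pm_e|^2/w^\pm\,dv$ and $\int|\mu^\pm_p|^2/w^\pm\,dv$ uniformly in $x$, and invoking the contractivity $\|Q^T_\pm\|_{L^2_\pm\to L^2_\pm}=1$ from Lemma \ref{propql}(\ref{qlnorm}); the same mechanism handles $\mathcal{C}^T$ and $\mathcal{D}^T$. For the range claim $\mathrm{Ran}(\mathcal{B}^\infty)\subset\{1\}^\perp$, I would evaluate $\int_0^P(\mathcal{B}^\infty h)(x)\,dx$ directly. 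Since $w^\pm=c(1+|e^\pm|)^{-\alpha}$, the quotient $g^\pm:=\mu^\pm_e/w^\pm$ depends only on $(e^\pm,p^\pm)$ and hence lies in $\ker D^\pm$, so self-adjointness of the orthogonal projection $Q^\infty_\pm$ on $L^2_\pm$ lets me transfer the projection off $\hat v_2 h$; combined with the identity $\mu^\pm_p+\hat v_2\mu^\pm_e=\partial_{v_2}\mu^\pm$, what remains is $\sum_\pm\int_0^P h\int\partial_{v_2}\mu^\pm\,dv\,dx$, which vanishes after integrating by parts in $v_2$.

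For claim (2), I would write $(\mathcal{B}^T-\mathcal{B}^\infty)h=\sum_\pm\int\mu^\pm_e(Q^T_\pm-Q^\infty_\pm)(\hat v_2 h)\,dv$ and apply the same weighted Cauchy--Schwarz estimate as in (1), reducing the strong $L^2_P$-convergence to the strong $L^2_\pm$-convergence of $Q^T_\pm$ given by Lemma \ref{propql}(\ref{laz}), with dominated convergence passing the limit through the outer $L^2_P$-norm. For $\mathcal{C}^T$ and $\mathcal{D}^T$ I expect the limit operators to vanish identically: item (5) of Lemma \ref{propql} says that $Q^\infty_\pm(\hat v_1)$ is odd in $v_1$, while $\mu^\pm_e$ (which depends on $v_1$ only through $\sqrt{1+|v|^2}$) and $\hat v_2$ are even in $v_1$, so both $v$-integrals vanish by parity. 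Claim (3) is then an immediate propagation of the Lipschitz bound of Lemma \ref{propql}(\ref{lanotz}) through the weighted Cauchy--Schwarz estimate from (1).

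Claim (4) I would handle on the same template, using Lemma \ref{propql}(\ref{lainfty}) to conclude $Q^T_\pm(\hat v_i h)\to\hat v_i h$ in $L^2_\pm$ as $T\to 0$. The limit of $\mathcal{B}^T h$ then collapses to $\sum_\pm\int(\mu^\pm_p+\hat v_2\mu^\pm_e)h\,dv$, which vanishes by the same $\partial_{v_2}\mu^\pm$ identity used in (1); the limits of $\mathcal{C}^T$ and $\mathcal{D}^T$ carry a factor of $\hat v_1$ against $v_1$-even integrands and vanish by the parity argument of (2). The main obstacle in all four parts is the bookkeeping between the $L^2_\pm$-topology, in which $Q^T_\pm$ naturally operates, and the $L^2_P$-topology required by the statement; this is handled uniformly by the weighted Cauchy--Schwarz step supported by \eqref{eq:int-condition}, so the argument as a whole is more organizational than technically deep.
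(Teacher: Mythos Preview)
The paper does not actually prove this lemma: Section~\ref{sec:prop-oper} states that ``the proofs are technical and we refer to \cite{Ben-Artzi2011} for the details,'' so there is no in-paper argument to compare against. Your proposal is nonetheless a correct and complete outline, and it is the natural one given the ingredients the paper makes available (Lemma~\ref{propql}, the integrability condition~\eqref{eq:int-condition}, and the Jeans representation~\eqref{eq:jeans}).

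Two small remarks. First, in part~(2) you invoke dominated convergence to pass the limit through the outer $L^2_P$-norm, but your weighted Cauchy--Schwarz step already converts the $L^2_P$-norm of the difference into a constant times $\|(Q^T_\pm-Q^\infty_\pm)(\hat v_2 h)\|_{L^2_\pm}$, so Lemma~\ref{propql}(\ref{laz}) applies directly with no further limit interchange needed. Second, note that $\mathcal{C}^T$ and $\mathcal{D}^T$ act on $b\in\mathbb{R}$ rather than on $h\in L^2_P$ (the statement's phrasing is slightly loose here), but your parity argument handles both the operators and their adjoints, which is all that is required.
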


\begin{lemma}[Properties of $l^{T}$]\label{propll}
Let $0<{T}<\infty$.
	\begin{enumerate}
	\item
	$l^{T}\to l^\infty$ as ${T}\to\infty$.
	
	\item
	$l^{T}$ is uniformly bounded in ${T}$.
	
	\end{enumerate}
\end{lemma}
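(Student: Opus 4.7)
The plan is to recast $l^T$ as an $L^2_\pm$ inner product, so that the properties of $Q^T_\pm$ already recorded in Lemma \ref{propql} apply directly. Writing $\mu^\pm_e = (\mu^\pm_e/w^\pm)\cdot w^\pm$ and recognising the measure $w^\pm\,dv\,dx$ as the one defining the inner product on $L^2_\pm$, I would first put
\begin{equation*}
l^T = \frac{1}{P}\sum_\pm \left\langle \hat{v}_1\,\frac{\mu^\pm_e}{w^\pm},\; Q^T_\pm(\hat{v}_1)\right\rangle_{L^2_\pm}.
\end{equation*}
As a preliminary, both arguments must lie in $L^2_\pm$. This is immediate once one notes that $|\hat{v}_1|\leq 1$ while $|\mu^\pm_e/w^\pm|\leq 1$ by the integrability condition \eqref{eq:int-condition}, and that the total weighted mass $\int_0^P\int_{\R^2}w^\pm\,dv\,dx$ is finite (as already used to make sense of \eqref{eq:int-condition}, since $\alpha>2$ and $|e^\pm|\to\infty$ as $|v|\to\infty$).

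For part (2) (uniform boundedness), Cauchy--Schwarz combined with item \ref{qlnorm} of Lemma \ref{propql}, which gives $\|Q^T_\pm(\hat{v}_1)\|_{L^2_\pm}\leq\|\hat{v}_1\|_{L^2_\pm}$, yields
\begin{equation*}
|l^T| \leq \frac{1}{P}\sum_\pm \left\|\hat{v}_1\,\frac{\mu^\pm_e}{w^\pm}\right\|_{L^2_\pm}\|\hat{v}_1\|_{L^2_\pm}.
\end{equation*}
The right-hand side is finite by the preliminary check and, crucially, independent of $T$.

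For part (1) (convergence), the reformulation makes the limit nearly automatic. Item \ref{laz} of Lemma \ref{propql} provides the strong convergence $Q^T_\pm(\hat{v}_1)\to Q^\infty_\pm(\hat{v}_1)$ in $L^2_\pm$ as $T\to\infty$, and continuity of the inner product in its second argument then gives
\begin{equation*}
l^T - l^\infty = \frac{1}{P}\sum_\pm \left\langle \hat{v}_1\,\frac{\mu^\pm_e}{w^\pm},\; Q^T_\pm(\hat{v}_1) - Q^\infty_\pm(\hat{v}_1)\right\rangle_{L^2_\pm} \longrightarrow 0.
\end{equation*}

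No step should present a serious obstacle; in fact the content of the lemma is essentially a corollary of Lemma \ref{propql}. The only point requiring thought is the initial rewriting as an $L^2_\pm$ pairing, which is precisely what allows items \ref{qlnorm} and \ref{laz} of Lemma \ref{propql} to do all the work. Notably, one does not need any uniform (in $T$) control of $Q^T_\pm$ beyond the operator-norm bound, which is why this lemma is much easier than the tracking argument underlying Theorem \ref{thm:resolvent}.
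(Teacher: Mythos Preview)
Your proof is correct. The paper does not actually give its own proof of this lemma---all proofs in Section~\ref{sec:prop-oper} are deferred to \cite{Ben-Artzi2011}---but your argument, rewriting $l^T$ as an $L^2_\pm$ pairing and then invoking the operator-norm bound and strong convergence of $Q^T_\pm$ from Lemma~\ref{propql}, is precisely the natural one and is what the organization of the paper implicitly suggests.
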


\begin{lemma}[Properties of $\mathcal{M}^T$]\label{propml}
To simplify notation, we write $u$ for a generic element $\left(\phi,\psi,b\right)\in{H_P^2}\times{H_P^2}\times\R$.
\begin{enumerate}
\item
For all ${T}\geq0$, $\mathcal{M}^T$ is self-adjoint on ${L_P^2}\times{L_P^2}\times\R$ with domain ${H_P^2}\times{H_P^2}\times\R$.

\item
For all $u\in{H_P^2}\times{H_P^2}\times\R$, $\|\mathcal{M}^T u-\mathcal{M}^\infty u\|_{{L_P^2}\times{L_P^2}\times{L_P^2}}\to0$ as ${T}\to\infty$.

\item
If $S>0$, then $\|\mathcal{M}^T-\mathcal{M}^S\|\to0$ as $T\to S$, where $\|\cdot\|$ is the operator norm from ${H_{P,0}^2}\times{H_P^2}\times\R$ to ${L_P^2}\times{L_P^2}\times{L_P^2}$.

\end{enumerate}
\end{lemma}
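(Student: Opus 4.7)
The plan is to establish the three items by exploiting the block structure of $\mathcal{M}^T$ and assembling the componentwise results already collected in Lemmas \ref{properties1}, \ref{properties2}, and \ref{propll}. Specifically, I would write $\mathcal{M}^T = \mathcal{D}^T + \mathcal{O}^T$, where $\mathcal{D}^T = \operatorname{diag}\bigl(-\mathcal{A}_1^T,\mathcal{A}_2^T,-P(T^{-2}-l^T)\bigr)$ and $\mathcal{O}^T$ collects the off-diagonal entries $\mathcal{B}^T,\mathcal{C}^T,\mathcal{D}^T$ together with their adjoints in the block positions dictated by \eqref{eq:matrix1}.

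For item (1), the diagonal part $\mathcal{D}^T$ is self-adjoint on $H^2_{P,0}\times H^2_P\times\mathbb{R}$ because its first two blocks are self-adjoint with these very domains by Lemma \ref{properties1}(1) and the third is a real scalar. The matrix pattern in \eqref{eq:matrix1} is arranged precisely so that $\mathcal{O}^T$ is formally symmetric; since each off-diagonal entry is bounded on $L^2_P$ by Lemma \ref{properties2}(1), $\mathcal{O}^T$ is a bounded self-adjoint operator on $L^2_P\times L^2_P\times\mathbb{R}$. The Kato--Rellich theorem applied to this bounded symmetric perturbation then yields self-adjointness of $\mathcal{M}^T$ on the domain of $\mathcal{D}^T$.

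For item (2), I would verify the strong convergence $\mathcal{M}^T u \to \mathcal{M}^\infty u$ block-by-block. Lemma \ref{properties1}(2) handles the diagonal entries $\mathcal{A}_1^T u_1, \mathcal{A}_2^T u_2$; Lemma \ref{properties2}(2) gives $\mathcal{B}^T u_2\to\mathcal{B}^\infty u_2$ together with $\mathcal{C}^T b, \mathcal{D}^T b \to 0$ in $L^2_P$, matching the form of $\mathcal{M}^\infty$ suggested by \eqref{eq:mn-infty}; and Lemma \ref{propll}(1) together with $T^{-2}\to 0$ handles the bottom-right scalar. The adjoints $(\mathcal{B}^T)^*, (\mathcal{C}^T)^*, (\mathcal{D}^T)^*$ are controlled by duality from the same lemmas, exploiting the uniform bounds of Lemma \ref{properties2}(1). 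Summing block contributions gives the claimed strong convergence. For item (3), the same block decomposition reduces the claim to operator-norm continuity of each individual entry: Lemma \ref{properties1}(3) furnishes $H^2\to L^2$ continuity of $\mathcal{A}_i^T$, Lemma \ref{properties2}(3) the $L^2\to L^2$ (hence $H^2\to L^2$ via the continuous embedding) continuity of $\mathcal{B}^T,\mathcal{C}^T,\mathcal{D}^T$, and the smoothness of $T\mapsto T^{-2}$ on $(0,\infty)$ combined with Lemma \ref{propll} handles the scalar entry.

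The main technical point lies in item (1): one must check carefully that the formal adjoint arrangement in \eqref{eq:matrix1} really yields a symmetric $\mathcal{O}^T$ with respect to the Hilbert-space structure $L^2_P\times L^2_P\times \mathbb{R}$, and that the Kato--Rellich perturbation indeed keeps the domain of $\mathcal{D}^T$ unchanged. Once this is in place, items (2) and (3) reduce to routine bookkeeping across the blocks using the already-cited lemmas.
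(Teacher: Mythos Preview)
The paper does not actually prove this lemma: Section~\ref{sec:prop-oper} opens with the remark that ``the proofs are technical and we refer to \cite{Ben-Artzi2011} for the details,'' and Lemma~\ref{propml} is stated without argument. So there is no in-paper proof to compare against; your block-decomposition strategy (Kato--Rellich for the diagonal plus bounded symmetric off-diagonal, then entrywise assembly of the convergence statements from Lemmas~\ref{properties1}, \ref{properties2}, \ref{propll}) is precisely the natural route and is what the cited reference carries out.

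One point in your sketch deserves more care. In item~(2) you say the adjoints $(\mathcal{B}^T)^*,(\mathcal{C}^T)^*,(\mathcal{D}^T)^*$ are ``controlled by duality from the same lemmas, exploiting the uniform bounds.'' For $\mathcal{C}^T$ and $\mathcal{D}^T$ this is fine: their domain is $\mathbb{R}$, so they are rank-one and strong convergence to $0$ is automatically norm convergence, hence the adjoints also go to $0$ in norm. For $(\mathcal{B}^T)^*$, however, strong convergence of $\mathcal{B}^T$ together with uniform boundedness yields only \emph{weak} convergence of the adjoints, not strong convergence. To get $(\mathcal{B}^T)^*\phi\to(\mathcal{B}^\infty)^*\phi$ in $L^2_P$ you should instead write out $(\mathcal{B}^T)^*$ explicitly---it has the same structure as $\mathcal{B}^T$ but with $(Q^T_\pm)^*$ in place of $Q^T_\pm$---and invoke the analogue of Lemma~\ref{propql}(\ref{laz}) for the adjoint averaging operators (which are simply the forward-in-time averages and enjoy the same strong limit). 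With that correction your argument goes through.
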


\begin{lemma}\label{well-defined}
The operator $\left(\mathcal{B}^\infty\right)^*\left(\mathcal{A}_1^\infty\right)^{-1}\mathcal{B}^\infty$ is a well-defined bounded operator from ${L_P^2}\to{L_P^2}$.
\end{lemma}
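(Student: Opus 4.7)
The key issue is that $\mathcal{A}_1^\infty$, viewed as an operator on $L_P^2$, is \emph{not} invertible: by Lemma \ref{properties1}, item 5, its kernel contains the constant functions. Therefore the notation $(\mathcal{A}_1^\infty)^{-1}$ must be interpreted on the appropriate invariant subspace. The strategy is to verify that $\mathcal{B}^\infty$ lands inside that subspace, restrict $\mathcal{A}_1^\infty$ to it, invert there, and then compose.

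First, I would invoke Lemma \ref{properties2}, item \ref{blnorm}, which states that $\mathcal{B}^\infty:L^2_P\to L^2_P$ is bounded with $\mathrm{Ran}(\mathcal{B}^\infty)\subset\{1\}^\perp=L^2_{P,0}$. Dually, $(\mathcal{B}^\infty)^*:L^2_P\to L^2_P$ is bounded with the same norm. Hence all that remains is to exhibit a bounded inverse $(\mathcal{A}_1^\infty)^{-1}:L^2_{P,0}\to L^2_{P,0}$.

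Next, by Lemma \ref{properties1}, item \ref{op-norm}, the operator $\mathcal{A}_1^\infty$ is self-adjoint on $L^2_{P,0}$ with domain $H^2_{P,0}$ and discrete spectrum. Under hypothesis (i) of Theorem \ref{thm:main-intro}, the only elements of $\ker\mathcal{A}_1^\infty$ (viewed on $H_P^2$) are constants, so the restricted operator $\mathcal{A}_1^\infty:H^2_{P,0}\to L^2_{P,0}$ has trivial kernel. Combined with discreteness of the spectrum, this places $0$ in the resolvent set of the restriction, so $(\mathcal{A}_1^\infty)^{-1}:L^2_{P,0}\to H^2_{P,0}\hookrightarrow L^2_{P,0}$ exists and is bounded by the spectral theorem (its norm equals $1/\mathrm{dist}(0,\Sigma(\mathcal{A}_1^\infty|_{L^2_{P,0}}))$).

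Composing the three bounded maps
\[
L^2_P\xrightarrow{\mathcal{B}^\infty}L^2_{P,0}\xrightarrow{(\mathcal{A}_1^\infty)^{-1}}L^2_{P,0}\hookrightarrow L^2_P\xrightarrow{(\mathcal{B}^\infty)^*}L^2_P
\]
yields the claim. The only delicate step is the spectral gap at $0$ in the third paragraph: the discreteness of the spectrum of $\mathcal{A}_1^\infty$ on $L^2_{P,0}$ (guaranteed because $\mathcal{A}_1^\infty$ is a bounded self-adjoint perturbation of $-\partial_x^2$ on a compact interval, hence has compact resolvent) rules out an accumulation of eigenvalues at $0$, so the triviality of the kernel on $L^2_{P,0}$ is enough to conclude that $0\in\rho(\mathcal{A}_1^\infty|_{L^2_{P,0}})$. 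This is the main subtlety; everything else is routine composition of bounded operators.
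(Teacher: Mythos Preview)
The paper does not actually prove this lemma in the text; it defers the proofs of all lemmas in Section~\ref{sec:prop-oper} to \cite{Ben-Artzi2011}. Your argument is exactly the one the paper has in mind: the sentence in Section~\ref{sec:tracking} that ``this inversion is allowed since $\mathcal{A}_1^\infty$ is invertible on the image of $\mathcal{B}^\infty$'' is precisely the fact that $\mathrm{Ran}(\mathcal{B}^\infty)\subset\{1\}^\perp=L^2_{P,0}$ (Lemma~\ref{properties2}\eqref{blnorm}) together with the invertibility of $\mathcal{A}_1^\infty$ on $L^2_{P,0}$.

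One remark worth making explicit: as you observe, the invertibility of $\mathcal{A}_1^\infty$ on $L^2_{P,0}$ genuinely requires hypothesis~(i) of Theorem~\ref{thm:main-intro}, even though the lemma is stated without it. Without that hypothesis, $\mathcal{A}_1^\infty|_{L^2_{P,0}}$ could have a nontrivial kernel and the composition would not be well defined. This is not a gap in your argument---you flagged it correctly---but it is a standing assumption that the lemma statement leaves implicit.
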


%    Bibliographies can be prepared with BibTeX using amsplain,
%    amsalpha, or (for "historical" overviews) natbib style.
\bibliography{library}
\bibliographystyle{amsplain}
%    Insert the bibliography data here.

\end{document}